\setlist[enumerate,1]{label=\textup{(\arabic*)}}
\tikzset{cd/.style=matrix of math nodes,row sep=2em,column sep=2em, text height=1.5ex, text depth=0.5ex}
\tikzset{cdar/.style=->,auto}
\tikzset{mid/.style={anchor=mid}} 
\tikzset{dar/.style={double,double equal sign distance,-implies}}
\tikzset{narrowfill/.style={inner sep=1pt, fill=white}}
\renewcommand*{\PrintDOI}[1]{\href{http://dx.doi.org/\detokenize{#1}}{doi: \detokenize{#1}}}
\numberwithin{equation}{section}
\theoremstyle{plain}
\newtheorem{theorem}[equation]{Theorem}
\newtheorem{lemma}[equation]{Lemma}
\newtheorem{proposition}[equation]{Proposition}
\newtheorem{corollary}[equation]{Corollary}
\theoremstyle{definition}
\newtheorem{definition}[equation]{Definition}
\theoremstyle{remark}
\newtheorem{remark}[equation]{Remark}
\newtheorem{example}[equation]{Example}
\newcommand*{\Qu}{\mathsf{p}}
\newcommand*{\s}{s}
\newcommand*{\rg}{r}
\newcommand*{\C}{\mathbb C}
\newcommand*{\T}{\mathbb T}
\newcommand*{\Mat}{\mathbb M}
\newcommand*{\Mult}{\mathcal M}
\newcommand*{\Cont}{\mathrm C}
\newcommand*{\Contb}{\mathrm{C_b}}
\newcommand*{\reg}{\mathrm{reg}}
\newcommand*{\Slice}[1][U]{\mathcal{#1}}
\newcommand*{\K}{\textup{K}}
\newcommand*{\Cst}{\textup C^*}
\newcommand*{\Star}{\texorpdfstring{$^*$\nb-}{*-}}
\newcommand{\Gr}[1][G]{\mathcal #1}
\newcommand*{\Bisp}[1][X]{\mathcal #1}
\newcommand{\Reg}{\mathcal R}
\newcommand{\Mod}{\mathcal M}
\newcommand{\Bis}{\mathcal{B}}
\newcommand*{\nb}{\nobreakdash}  
\DeclarePairedDelimiter{\abs}{\lvert}{\rvert}
\DeclarePairedDelimiterX{\braket}[2]{\langle}{\rangle}{#1\,\delimsize\vert\,\mathopen{}#2}
\DeclarePairedDelimiterX{\setgiven}[2]{\{}{\}}{#1\,{:}\,\mathopen{}#2}
\newcommand*{\defeq}{\mathrel{\vcentcolon=}}
\begin{document}

\title{Graph morphisms as groupoid actors}

\author{Gilles G. de Castro}
\email{gilles.castro@ufsc.br}

\address{Departamento de Matem\'atica, Universidade Federal de Santa Catarina, 88040-970 Florian\'opolis SC, Brazil.}

\author{Ralf Meyer}
\email{rmeyer2@uni-goettingen.de}

\address{Mathematisches Institut\\
  Georg-August Universit\"at G\"ottingen\\
  Bunsenstra\ss{}e 3--5\\
  37073 G\"ottingen\\
  Germany}

\begin{abstract}
  We describe proper actors from the underlying groupoid of a graph
  \(\Cst\)\nb-algebra to another \'etale groupoid in terms of
  bisections.  This allows to understand graph morphisms and the
  \Star{}homomorphisms that they induce more conceptually.  More
  generally, we describe actors from the groupoid model of a groupoid
  correspondence to any \'etale groupoid.  This also covers the
  groupoids associated to self-similar groups and self-similar graphs,
  among others.
\end{abstract}

\subjclass[2010]{Primary 46L35; Secondary 19K35}

\thanks{This work is part of the project Graph Algebras partially
  supported by EU grant HORIZON-MSCA-SE-2021 Project 101086394. G. G. de Castro was partially supported by Conselho Nacional de Desenvolvimento Cient\'ifico e Tecnol\'ogico (CNPq) - Brazil, and Funda\c{c}\~ao de Amparo \`a Pesquisa e Inova\c{c}\~ao do Estado de Santa Catarina (FAPESC).}

\maketitle

\section{Introduction}
\label{sec:intro}

Graph \(\Cst\)\nb-algebras are very concrete \(\Cst\)\nb-algebras and
for them we may answer most \(\Cst\)\nb-algebraic questions --~such as
whether they are purely infinite or simple~-- in terms of the
underlying graph.

This article continues the study of ``combinatorial''
\Star{}homomorphisms between graph \(\Cst\)\nb-algebras (see, in
particular, \cite{Castro-DAndrea-Hajac:Relation_morphisms}).  Here we
take a rather different approach and use that graph
\(\Cst\)\nb-algebras are groupoid \(\Cst\)\nb-algebras of \'etale
groupoids.  We may, in fact, describe a nice class of
\Star{}homomorphisms from a graph \(\Cst\)\nb-algebra to the
\(\Cst\)\nb-algebra \(\Cst(\Gr[H])\) of any \'etale
groupoid~\(\Gr[H]\).  Recall that a \emph{bisection} or \emph{slice}
of~\(\Gr[H]\) is an open subset on which both the range map~\(\rg\)
and the source map~\(\s\) of~\(\Gr[H]\) are injective.  The characteristic
function of a compact-open bisection \(\Slice\subseteq \Gr[H]\) is a
partial isometry in \(\Cst(\Gr[H])\), and its range and source
projections are the characteristic functions of \(\rg(\Slice)\) and
\(\s(\Slice)\), respectively.

Now let \(\Gamma = (\rg,\s\colon E\rightrightarrows V)\) be a directed
graph.

\begin{definition}
  \label{def:dCK}
  A \emph{dynamical Cuntz--Krieger \(\Gamma\)\nb-family in a
    groupoid~\(\Gr[H]\)} is a family of compact-open subsets
  \(\Omega_v\subseteq \Gr[H]^0\) for \(v\in V\) and bisections
  \(T_e\subseteq \Gr[H]\) for \(e\in E\) such that
  \begin{enumerate}[label=\textup{(\ref*{def:dCK}.\arabic*)},
    leftmargin=*,labelindent=0em]
  \item \label{en:dCK_1}%
    \(\Omega_v \cap \Omega_w = \emptyset\) for \(v,w\in V\) with
    \(v\neq w\);
  \item \label{en:dCK_2}%
    for all \(e\in E\), \(\s(T_e) = \Omega_{\s(e)}\) and
    \(\rg(T_e) \subseteq \Omega_{\rg(e)}\);
  \item \label{en:dCK_3}%
    \(\rg(T_e) \cap \rg(T_f) = \emptyset\) for \(e,f\in E\) with
    \(e\neq f\);
  \item \label{en:dCK_4}%
    \(\bigsqcup_{\rg(e) = v} \rg(T_e) = \Omega_v\) for each regular
    vertex \(v\in V_\reg\).
  \end{enumerate}
\end{definition}

Given a dynamical Cuntz--Krieger family in~\(\Gr[H]\), their
characteristic functions form a Cuntz--Krieger family in
\(\Cst(\Gr[H])\) as in~\cite{Raeburn:Graph_algebras}.  The latter
induces a nondegenerate \Star{}homomorphism
\(\Cst(\Gamma) \to \Cst(\Gr[H])\), which maps the generators \(p_v\)
and~\(t_e\) of the graph \(\Cst\)\nb-algebra to the characteristic
functions of \(\Omega_v\) and \(T_e\), respectively.  A dynamical
Cuntz--Krieger family is \emph{nondegenerate} if
\(\Gr[H]^0 = \bigsqcup_{v\in V} \Omega_v\).  Most theoretical results
assume this condition.  It is no big loss of generality to restrict
attention to the nondegenerate case because a dynamical Cuntz--Krieger
family in~\(\Gr[H]\) is a nondegenerate dynamical Cuntz--Krieger
family in \(\Gr[H]|_{\Omega}\) for the open subset
\(\Omega = \bigsqcup_{v\in V} \Omega_v\).  The groupoid
\(\Cst\)\nb-algebra of \(\Gr[H]|_{\Omega}\) is a hereditary subalgebra
of that of~\(\Gr[H]\).

Any graph \(\Cst\)\nb-algebra is a groupoid \(\Cst\)\nb-algebra
\(\Cst(\Gamma) \cong \Cst(\Gr_\Gamma)\) for an explicit
groupoid~\(\Gr_\Gamma\), which was described in complete generality by
Paterson~\cite{Paterson:Graph_groupoids}, following the description in
special cases in the seminal
article~\cite{Kumjian-Pask-Raeburn-Renault:Graphs}.  So the
construction above also produces \Star{}homomorphisms between two
graph \(\Cst\)\nb-algebras.  As we shall see, the construction above
covers all the \Star{}homomorphisms between graph \(\Cst\)\nb-algebras
that have been described combinatorially
in~\cite{Castro-DAndrea-Hajac:Relation_morphisms}.  It is far more
general, however.  In particular, all the \Star{}homomorphisms
considered in~\cite{Castro-DAndrea-Hajac:Relation_morphisms} map
vertex projections to sums of vertex projections, but this need not be
true for dynamical Cuntz--Krieger families.

We arrived at the dynamical Cuntz--Krieger families in a different
way, namely, by working out a universal property of the underlying
groupoid~\(\Gr_\Gamma\) of a graph \(\Cst\)\nb-algebra.  This
universal property originally describes actions of~\(\Gr_\Gamma\) on
topological spaces, but it also describes the arrows
from~\(\Gr_\Gamma\) to other \'etale groupoids in a certain category
of \'etale groupoids, which is originally due to Buneci and
Stachura~\cite{Buneci-Stachura:Morphisms_groupoids}.  The arrows in
this category are such that they induce nondegenerate
\Star{}homomorphisms between the groupoid \(\Cst\)\nb-algebras.  We
call these arrows \emph{actors}.  Our main result asserts that there
is a natural bijection between nondegenerate dynamical Cuntz--Krieger
families and actors \(\Gr_\Gamma \to \Gr[H]\).

This categorical interpretation of nondegenerate dynamical
Cuntz--Krieger families has several nice consequences.  First, since
actors are the arrows of a category, they may be composed, and this
carries over to nondegenerate dynamical Cuntz--Krieger families.  In
contrast, the composition
in~\cite{Castro-DAndrea-Hajac:Relation_morphisms} remains rather
mysterious, mostly because of technical extra conditions that are
needed in~\cite{Castro-DAndrea-Hajac:Relation_morphisms}.  Another
important consequence concerns inverses.  We do not know, in general,
whether an actor between two \'etale groupoids that induces an
isomorphism between the groupoid \(\Cst\)\nb-algebras must already be
a groupoid isomorphism.  If, however, the groupoids in question are
Hausdorff and effective, then this is the case.  Using this, we prove
under a mild condition that if an isomorphism of graph
\(\Cst\)\nb-algebras is induced by a dynamical Cuntz--Krieger family,
then so is its inverse.  Example~\ref{exa:concrete_inverse}
illustrates this in a very simple case.

Our approach generalises far beyond graph \(\Cst\)\nb-algebras.  The
same method covers the \(\Cst\)\nb-algebras of self-similar groups and
self-similar graphs.  The theory even has a parallel for topological
graphs or even topological, higher-rank, self-similar graphs.  In this
article, we only consider the rank-one case, however.  Our starting
point is an (\'etale, locally compact) groupoid
correspondence~\(\Bisp\) on an (\'etale, locally compact)
groupoid~\(\Gr\).  This induces a \(\Cst\)\nb-correspondence
\(\Cst(\Bisp)\) on the groupoid \(\Cst\)\nb-algebra \(\Cst(\Gr)\).  In
addition, we choose an open \(\Gr\)\nb-invariant subset
\(\Reg\subseteq \Gr^0\) such that the map
\(\rg_*\colon \Bisp/\Gr \to \Gr^0\) induced by the anchor map of the
left \(\Gr\)\nb-action restricts to a proper map
\(\rg_*^{-1}(\Reg) \to \Reg\).  Then \(\Cst(\Gr|_\Reg)\) is an ideal
in \(\Cst(\Gr)\) that acts on~\(\Cst(\Bisp)\) by compact operators.
So the Cuntz--Pimsner algebra of \(\Cst(\Bisp)\) relative to the ideal
\(\Cst(\Gr|_\Reg)\) is defined.  It is shown in the
dissertation~\cite{Antunes:Thesis} that this relative Cuntz--Pimsner
algebra is the groupoid \(\Cst\)\nb-algebra of a certain groupoid,
called the \emph{groupoid model} of~\(\Bisp\) relative to~\(\Reg\).
This groupoid model is characterised through a universal property that
describes its actions on topological spaces in terms of suitable
``actions'' of \(\Gr\) and~\(\Bisp\).  This allows to describe also
the actors from the groupoid model to another \'etale groupoid.  Our
main reference for this theory
is~\cite{Meyer:Groupoid_models_relative}, where the main results
of~\cite{Antunes:Thesis} are proven and a technical error
in~\cite{Antunes:Thesis} is corrected.

Any graph \(\Cst\)\nb-algebra is a relative Cuntz--Pimsner algebra as
above, where the groupoid~\(\Gr\) is simply a discrete set with only
identity arrows.  So the graph \(\Cst\)\nb-algebra is isomorphic to
the groupoid \(\Cst\)\nb-algebra of an appropriate groupoid model.
The universal property of this groupoid model implies a description of
the actors from the groupoid model to any other groupoid, and these
turn out to be equivalent to the nondegenerate dynamical
Cuntz--Krieger families as described above.  More generally, our
theory applies to all relatively proper groupoid correspondences.

Let us make this explicit for the case of a self-similar group.  We
describe this through a group~\(\Gr\) and a set~\(\Bisp\) with
commuting actions of~\(\Gr\) on the left and right, such that the
right action is free, proper and cocompact.  (This is called a
\emph{permutational bimodule} in~\cite{Nekrashevych:Self-similar}, and
it is one way to describe a self-similarity of~\(\Gr\).)  The groupoid
model for this groupoid correspondence is the groupoid built by
Nekrashevych~\cite{Nekrashevych:Cstar_selfsimilar} in order to attach
a \(\Cst\)\nb-algebra to a self-similar group.  A groupoid actor from
the Nekrashevych groupoid of the self-similarity~\(\Bisp\) of~\(\Gr\)
to another groupoid~\(\Gr[H]\) associates to each \(g\in \Gr\) and
\(x\in \Bisp\) bisections \(T_g\) and~\(T_x\) of~\(\Gr[H]\), such that
the following conditions hold:
\begin{enumerate}
\item for the unit element \(e\in \Gr\), \(T_e\) is the unit
  bisection~\(\Gr[H]^0\);
\item \(T_g T_h = T_{g h}\) if \(g,h\in \Gr \bigsqcup \Bisp\) and at
  most one of them belongs to~\(\Bisp\);
\item \(T_x^* T_x\) is the identity bisection for all \(x\in \Bisp\),
  and \(T_y^* T_x = \emptyset\) if \(x\cdot \Gr \neq y\cdot \Gr\);
\item the ranges of~\(T_x\) for \(x\in \Bisp\) cover~\(\Gr[H]^0\).
\end{enumerate}
This follows from
Corollary~\ref{cor:actor_from_discrete_groupoid_model_bisections}.  By
the second condition above, it is enough to specify the
bisections~\(T_x\) for~\(x\) in a fundamental domain for the free
right \(\Gr\)\nb-action on~\(\Bisp\).  This fundamental domain is the
alphabet used to describe the self-similarity, and the compatibility
between the generators \(T_g\) and~\(T_x\) becomes
\(T_g T_x = T_{g\cdot x} T_{g|_x}\) in the usual notation for
self-similar groups.

\section{Groupoid models of groupoid correspondences}
\label{sec:groupoid_correspondences_models}

In this section, we describe the class of groupoids and
\(\Cst\)\nb-algebras to which our theory applies.  It goes
back to Albandik~\cite{Albandik:Thesis} and is further worked out
in~\cite{Meyer:Diagrams_models}.  We follow
\cites{Antunes-Ko-Meyer:Groupoid_correspondences,
  Meyer:Groupoid_models_relative}, which contain the results of the
dissertation by Antunes~\cite{Antunes:Thesis} most relevant to this
article.  The advantage of the setting
in~\cite{Meyer:Groupoid_models_relative} over that
in~\cite{Albandik:Thesis} is that it also covers relative
Cuntz--Pimsner algebras and thus the \(\Cst\)\nb-algebras of irregular
graphs.

Throughout this article, groupoids are tacitly assumed to be \'etale
and locally compact with Hausdorff object space.  We allow
non-Hausdorff groupoids because they arise automatically from the
theory.  For instance, the Nekrashevych groupoid of a self-similar
group may fail to be Hausdorff, even in important examples like the
Grigorchuk group
(compare~\cite{Clark-Exel-Pardo-Sims-Starling:Simplicity_non-Hausdorff}).

We define actions of groupoids on topological spaces in the standard
way (see
\cite{Antunes-Ko-Meyer:Groupoid_correspondences}*{Definition~2.3}).
We always denote the anchor maps of left actions by~\(\rg\) and those
of right actions by~\(\s\).  So all kinds of products \(x\cdot y\) are
defined if \(\s(x) = \rg(y)\).

A (locally compact, \'etale) \emph{groupoid correspondence} between
two groupoids \(\Gr\) and~\(\Gr[H]\) is a topological space~\(\Bisp\)
with commuting actions of~\(\Gr\) on the left and~\(\Gr[H]\) on the
right, such that the right action is free and proper and the right
anchor map \(\s\colon \Bisp\to \Gr[H]^0\) is \'etale, that is, a local
homeomorphism (see
\cite{Antunes-Ko-Meyer:Groupoid_correspondences}*{Definition~3.1});
this forces~\(\Bisp\) to be locally Hausdorff and locally compact.  A
groupoid correspondence is \emph{proper} if the map
\(\rg_*\colon \Bisp/\Gr[H] \to \Gr^0\) induced by the left anchor map
\(\rg\colon \Bisp\to \Gr^0\) is proper, and \emph{regular}
if~\(\rg_*\) is proper and surjective.  As
in~\cite{Antunes-Ko-Meyer:Groupoid_correspondences}, we write a
groupoid correspondence as an arrow
\(\Bisp\colon \Gr\leftarrow \Gr[H]\).  Groupoid correspondences may be
composed and form what is called a bicategory.  We shall, however, not
use this extra structure explicitly here.

A groupoid correspondence \(\Bisp\colon \Gr\leftarrow \Gr[H]\) induces
a \(\Cst\)\nb-correspondence
\[
  \Cst(\Bisp)\colon \Cst(\Gr) \leftarrow \Cst(\Gr[H]),
\]
that is, \(\Cst(\Bisp)\) is a right Hilbert \(\Cst(\Gr[H])\)-module
with a nondegenerate left action of \(\Cst(\Gr)\) by adjointable
operators (see
\cite{Antunes-Ko-Meyer:Groupoid_correspondences}*{Section~7}).  This
left action is by compact operators --~making \(\Cst(\Bisp)\) a
\emph{proper} correspondence~-- if and only if the groupoid
correspondence~\(\Bisp\) is proper (see
\cite{Antunes-Ko-Meyer:Groupoid_correspondences}*{Theorem~7.14}).  So
for a proper correspondence, we may form the \emph{absolute}
Cuntz--Pimsner algebra \(\mathcal{O}_{\Cst(\Bisp)}\), which is defined
by imposing the Cuntz--Pimsner covariance condition on all of
\(\Cst(\Gr)\), even if the latter does not act faithfully
on~\(\Cst(\Bisp)\).

\begin{definition}[\cite{Meyer:Groupoid_models_relative}]
  Let \(\Gr\) and~\(\Gr[H]\) be groupoids and let
  \(\Reg\subseteq \Gr\) be an open \(\Gr\)\nb-invariant subset.  A
  groupoid correspondence \(\Bisp\colon \Gr \leftarrow \Gr[H]\) is
  called \emph{proper on~\(\Reg\)} if the map from
  \(\rg_{\Bisp}^{-1}(\Reg)/\Gr[H] \subseteq \Bisp/\Gr[H]\) to~\(\Reg\)
  induced by \(\rg_{\Bisp}\colon \Bisp\to \Gr^0\) is proper.
\end{definition}

Let~\(\Bisp\) be a groupoid correspondence that is regular on an open
\(\Gr\)\nb-invariant subset \(\Reg\subseteq \Gr^0\).  Then
\(\Cst(\Gr_\Reg)\) is an ideal in \(\Cst(\Gr)\).  This ideal acts by
compact operators on \(\Cst(\Bisp)\) by
\cite{Antunes-Ko-Meyer:Groupoid_correspondences}*{Corollary~7.14}.  In
particular, we may take \(\Reg= \emptyset\).  Then the Cuntz--Pimsner
algebra of \(\Cst(\Bisp)\) relative to \(\Cst(\Gr_\Reg)\) is just the
Toeplitz \(\Cst\)\nb-algebra of \(\Cst(\Bisp)\).

\begin{example}[\cite{Antunes-Ko-Meyer:Groupoid_correspondences}*{Example~4.1}]
  \label{exa:graph_as_corr}
  Let \(\Gr=V\) and \(\Gr[H]=W\) be locally compact spaces turned into
  \'etale groupoids with only identity arrows.  An action of~\(V\) on
  a locally Hausdorff, locally compact space~\(E\) is just the anchor
  map \(E\to V\).  The resulting \(V\)\nb-action is free and proper if
  and only if~\(E\) is Hausdorff.  Thus a groupoid correspondence
  \(E\colon V\leftarrow W\) is the same as a Hausdorff, locally
  compact space~\(E\) with a continuous map \(\rg\colon E\to V\) and a
  local homeomorphism \(\s\colon E\to W\).  This is the same as a
  topological correspondence (see
  \cite{Katsura:class_I}*{Definition~1.2}).  If \(V=W\), this is
  called a topological graph.  The Cuntz--Pimsner algebra of
  \(\Cst(\Bisp)\) relative to the open subset~\(\Reg\) of regular
  vertices is the topological graph \(\Cst\)\nb-algebra as defined by
  Katsura~\cite{Katsura:class_I}.  If, in addition, \(V\) is discrete,
  then so is~\(E\), and then the groupoid correspondence becomes an
  ordinary directed graph.  A topological graph is proper as a
  groupoid correspondence if and only if \(\rg\colon E\to V\) is
  proper.  In the discrete case, this amounts to the graph being
  row-finite.
\end{example}

\begin{example}[\cite{Antunes-Ko-Meyer:Groupoid_correspondences}*{Example~4.2}]
  \label{exa:self-similar_group_corr}
  Let~\(\Gr\) be a discrete group.  A proper groupoid correspondence
  \(\Bisp\colon \Gr\leftarrow \Gr\) is a discrete set~\(\Bisp\) with
  commuting actions of~\(\Gr\) on the left and right, such that the
  right action is free, proper and cocompact.  This is almost what is
  called a \emph{permutational bimodule}
  in~\cite{Nekrashevych:Self-similar}, except that we do not require
  the induced action on the space of finite words to be faithful.  The
  permutational bimodule is one way to describe a self-similarity
  of~\(\Gr\).
\end{example}

The main result of~\cite{Meyer:Groupoid_models_relative} says that the
relative Cuntz--Pimsner algebra
\(\mathcal{O}_{\Cst(\Bisp),\Cst(\Gr_\Reg)}\) is a groupoid
\(\Cst\)\nb-algebra of a certain groupoid naturally associated to the
data.  We now describe this groupoid by describing its actions.
Let~\(\Gr\) be a groupoid, let \(\Reg\subseteq \Gr^0\) be open and
\(\Gr\)\nb-invariant, and let \(\Bisp\colon \Gr\leftarrow \Gr\) be a
groupoid correspondence that is regular on~\(\Reg\).

\begin{definition}[compare \cite{Meyer:Groupoid_models_relative}*{Definition~3.1}]
  \label{def:correspondence_action}
  Let~\(Y\) be a topological space.  An \emph{action}
  of~\((\Gr,\Bisp,\Reg)\) on~\(Y\) consists of a groupoid action
  of~\(\Gr\) on~\(Y\) and a continuous, open map~\(\mu\) from the
  fibre product \(\Bisp\times_{\s,\Gr^0,\rg} Y\) to~\(Y\), denoted
  multiplicatively as \((x,y)\mapsto x\cdot y\), such that
  \begin{enumerate}[label=\textup{(\ref*{def:correspondence_action}.\arabic*)},
    leftmargin=*,labelindent=0em]
  \item \label{en:diagram_dynamical_system_1}%
    \(\rg(x\cdot y) = \rg(x)\) and
    \(g\cdot (x\cdot y) = (g \cdot x)\cdot y\) for \(g\in \Gr\),
    \(x\in \Bisp\), \(y\in Y\) with \(\s(g) = \rg(x)\) and
    \(\s(x) = \rg(y)\);
  \item \label{en:diagram_dynamical_system_2}%
    if \(x,x'\in \Bisp\), \(y,y'\in Y\) satisfy \(\s(x) = \rg(y)\) and
    \(\s(x') = \rg(y')\), then \(x\cdot y = x'\cdot y'\) if and only
    if there is \(g\in \Gr\) with \(x' = x\cdot g\) and
    \(y = g\cdot y'\);
  \item \label{en:diagram_dynamical_system_3}%
    \(\rg^{-1}(\Reg) \subseteq Y\) is contained in the image
    of~\(\mu\);
  \item \label{en:diagram_dynamical_system_4}%
    if \(K\subseteq \Bisp/\Gr\) is compact, then the set of all
    \(x\cdot y\) with \(x\in \Bisp\), \(y\in Y\) with \([x]\in K\),
    \(\s(x) = \rg(y)\) is closed in~\(Y\).
  \end{enumerate}
  A map \(\varphi\colon Y_1 \to Y_2\) between two \(\Bisp\)
  \nb-actions is \emph{\((\Gr,\Bisp)\)-equivariant} if
  \(\varphi(g\cdot y) = g\cdot \varphi(y)\) for all \(g\in \Gr\),
  \(y\in Y\) with \(\s(g) = \rg(y)\),
  \(\varphi(x\cdot y) = x\cdot \varphi(y)\) for all \(x\in \Bisp\),
  \(y\in Y\) with \(\s(x) = \rg(y)\), and
  \(\varphi^{-1}(\Bisp\cdot Y_2) = \Bisp\cdot Y_1\).
\end{definition}

\begin{definition}
  \label{def:groupoid_model}
  A \emph{groupoid model} for \(\Bisp\) relative to~\(\Reg\) is an
  \'etale groupoid~\(\Mod\) such that for all topological
  spaces~\(Y\), there is a natural bijection between
  \((\Gr,\Bisp,\Reg)\)\nb-actions and \(\Mod\)\nb-actions on~\(Y\).
  Here naturality means that a continuous map between two spaces is
  \((\Gr,\Bisp)\)-equivariant if and only if it is
  \(\Mod\)\nb-equivariant for the associated \(\Mod\)\nb-actions.
\end{definition}

\begin{theorem}
  \label{the:CP_is_groupoid_Cstar}
  A groupoid model~\(\Mod\) exists and is unique up to canonical
  isomorphism.  Its groupoid \(\Cst\)\nb-algebra is naturally
  isomorphic to the Cuntz--Pimsner algebra of \(\Cst(\Bisp)\) relative
  to the ideal \(\Cst(\Gr_\Reg)\) in \(\Cst(\Gr)\).
\end{theorem}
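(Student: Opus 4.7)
The plan is to prove uniqueness first by the standard Yoneda argument for representable functors. Suppose $\Mod$ and $\Mod'$ both satisfy the property of Definition~\ref{def:groupoid_model}. The canonical action of $\Mod$ on its own unit space $\Mod^0$ (via the range and source maps) corresponds by the universal property to a $(\Gr,\Bisp,\Reg)$-action on $\Mod^0$, and this in turn corresponds to a $\Mod'$-action on $\Mod^0$. The symmetric construction yields a $\Mod$-action on $\Mod'^{0}$. Applying the naturality clause to the identity maps on $\Mod^0$ and on $\Mod'^{0}$ shows that the two resulting functors are mutually inverse, producing the canonical isomorphism $\Mod \cong \Mod'$ of \'etale groupoids.

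For existence, I would build $\Mod$ explicitly. One natural approach is to assemble an inverse semigroup $S$ from the compact-open slices (bisections) of $\Bisp$ and $\Gr$, with multiplication inherited from the composition in $\Bisp$ and the $\Gr$-action, and with formal adjoints of $\Bisp$-slices. The universal groupoid associated to $S$, acting on a suitable unit space built as an inverse limit along $\Bisp$-shifts starting from $\Gr^0$ with the regularity requirement enforced on~$\Reg$, is the candidate for $\Mod$. Axioms~\ref{en:diagram_dynamical_system_1} and~\ref{en:diagram_dynamical_system_2} of Definition~\ref{def:correspondence_action} dictate the defining relations of the inverse semigroup, the image-covering condition~\ref{en:diagram_dynamical_system_3} fixes the domain of the unit space, and the properness clause~\ref{en:diagram_dynamical_system_4} controls the topology. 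The universal property is then verified by translating any $(\Gr,\Bisp,\Reg)$-action on a space~$Y$ into a compatible family of partial homeomorphisms of $Y$ indexed by slices, which is precisely the data of a $\Mod$-action.

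To identify $\Cst(\Mod)$ with $\mathcal{O}_{\Cst(\Bisp), \Cst(\Gr_\Reg)}$, I would establish a bijection between their nondegenerate representations. On the groupoid side, such a representation corresponds via the disintegration theorem and the universal property of $\Mod$ to a $(\Gr,\Bisp,\Reg)$-action on a Hilbert module. On the Cuntz--Pimsner side, it is a covariant pair consisting of a nondegenerate representation of $\Cst(\Gr)$ and an equivariant linear map from $\Cst(\Bisp)$ satisfying the relative Cuntz--Pimsner covariance on $\Cst(\Gr_\Reg)$. Unpacking both in terms of characteristic functions of slices shows that they furnish the same data, with covariance on $\Cst(\Gr_\Reg)$ matching condition~\ref{en:diagram_dynamical_system_3}. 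The induced isomorphism of universal $\Cst$-algebras then follows.

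The hardest step will be the precise verification that the closedness condition~\ref{en:diagram_dynamical_system_4} is exactly what is needed to produce a bona fide \'etale groupoid with the correct topology: the condition is tailored to prevent spurious identifications at infinity that would make the quotient inverse semigroup groupoid either too small or non-Hausdorff in unwanted places, and it also ensures the right interplay with compact-open bisections needed on the \Star{}algebraic side to obtain compact operators as required by the relative Cuntz--Pimsner picture. This technical point is the heart of the construction in~\cite{Meyer:Groupoid_models_relative} (correcting an error in~\cite{Antunes:Thesis}), and any complete proof must carry out this verification in detail; I would follow the argument there.
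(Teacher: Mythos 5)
Your proposal is correct and ultimately rests on the same foundation as the paper's own proof, which consists entirely of citations to \cite{Meyer:Groupoid_models_relative} (Proposition~3.12 there for uniqueness, Theorem~8.8 there for the identification of \(\Cst(\Mod)\) with the relative Cuntz--Pimsner algebra). Your sketch of the Yoneda-style uniqueness argument and of the construction of~\(\Mod\) as the transformation groupoid of an inverse semigroup generated by the slices of \(\Gr\) and~\(\Bisp\) accurately reflects how that reference proceeds, and you correctly locate the genuine technical work --- which neither you nor the paper carries out --- in that citation.
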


\begin{proof}
  A particular groupoid model is constructed
  in~\cite{Meyer:Groupoid_models_relative}, and
  \cite{Meyer:Groupoid_models_relative}*{Proposition~3.12} asserts
  that any two groupoid models are canonically isomorphic.  The
  isomorphism between its groupoid \(\Cst\)\nb-algebra and the
  relative Cuntz--Pimsner algebra is
  \cite{Meyer:Groupoid_models_relative}*{Theorem~8.8}.
\end{proof}

\begin{remark}
  In the absolute case \(\Reg=\Gr^0\),
  Theorem~\ref{the:CP_is_groupoid_Cstar} goes back to
  Albandik~\cite{Albandik:Thesis}.  He also covers diagrams of proper
  groupoid correspondences.  These correspond to regular higher-rank
  (topological, self-similar) graphs.  The results below carry over to
  diagrams of correspondences in a straightforward way.
\end{remark}

We may describe \((\Gr,\Bisp,\Reg)\)-actions in a different way using
bisections in \(\Gr\) and~\(\Bisp\).  This will also be useful in this
article.  An open subset~\(\Slice\) in a groupoid
correspondence~\(\Bisp\) is a \emph{bisection} if
\(\s\colon \Bisp\to\Gr^0\) and \(\Qu\colon \Bisp\to \Bisp/\Gr\) are
injective on~\(\Slice\).  For the identity correspondence~\(\Gr\),
this specialises to the usual concept of a bisection of a groupoid,
that is, \(\s\) and~\(\rg\) being injective on an open
subset~\(\Slice\).  Let \(\Bis(\Bisp)\) denote the set of bisections
in~\(\Bisp\) and let \(\Bis\defeq \Bis(\Gr) \sqcup \Bis(\Bisp)\).  For
a space~\(Y\), let \(I(Y)\) denote the inverse semigroup of all
partial homeomorphisms on~\(Y\).

\begin{lemma}[\cite{Meyer:Diagrams_models}*{Lemma~4.4}]
  \label{lem:action_from_theta}
  Let~\(Y\) be a space and let \(\vartheta\colon \Bis\to I(Y)\) be a
  map.  It comes from a \((\Gr,\Bisp,\Reg)\)\nb-action on~\(Y\) if and
  only if
  \begin{enumerate}[label=\textup{(\ref*{lem:action_from_theta}.\arabic*)},
    leftmargin=*,labelindent=0em]
  \item \label{en:action_from_theta1}%
    \(\vartheta(\Slice \Slice[V]) = \vartheta(\Slice)\vartheta(\Slice[V])\)
    for all \(\Slice,\Slice[V]\in \Bis\);
  \item \label{en:action_from_theta2}%
    \(\vartheta(\Slice_1)^*\vartheta(\Slice_2) =
    \vartheta(\braket{\Slice_1}{\Slice_2})\) for all
    \(\Slice_1,\Slice_2\in\Bis(\Bisp)\);
  \item \label{en:action_from_theta4}%
    \(\vartheta(\Gr^0)\) is the identity map on all of~\(Y\) and
    \(\vartheta(\emptyset)\) is the empty partial map;
  \item \label{en:action_from_theta5}%
    the restriction of~\(\vartheta\) to open subsets of~\(\Gr^0\)
    commutes with arbitrary unions;
  \item \label{en:action_from_theta3}%
    the images of~\(\vartheta(\Slice)\) for \(\Slice\in\Bis(\Bisp)\) cover the
    domain of~\(\vartheta(\Reg)\);
  \item \label{en:action_from_theta6}%
    if \(\Slice\in\Bis(\Bisp)\) is precompact in~\(\Bisp\), then the
    closure of the codomain of \(\vartheta(\Slice)\) is contained in the
    union of the codomains of~\(\vartheta(W)\) for \(W\in\Bis(\Bisp)\).
  \end{enumerate}
  The corresponding \((\Gr,\Bisp,\Reg)\)-action on~\(Y\) is unique if it
  exists.
\end{lemma}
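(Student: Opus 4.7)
The plan is to prove both directions of the equivalence, treating the construction of~$\vartheta$ from an action as essentially bookkeeping and concentrating effort on the reverse direction. For a $(\Gr,\Bisp,\Reg)$-action on~$Y$ and a bisection $\Slice\in\Bis$, I would define $\vartheta(\Slice)$ as the partial homeomorphism sending~$y$ with $\rg(y)\in\s(\Slice)$ to $\xi\cdot y$ for the unique $\xi\in\Slice$ with $\s(\xi)=\rg(y)$; this is indeed a partial homeomorphism because~$\s$ is injective on~$\Slice$ and a local homeomorphism onto~$\s(\Slice)$, and because~$\mu$ is continuous and open. Conditions \ref{en:action_from_theta1}, \ref{en:action_from_theta2}, \ref{en:action_from_theta4} and \ref{en:action_from_theta5} then follow from the algebraic and topological structure of the action; \ref{en:action_from_theta3} rephrases \ref{en:diagram_dynamical_system_3}; and \ref{en:action_from_theta6} rephrases \ref{en:diagram_dynamical_system_4} once one observes that precompact $\Slice\subseteq\Bisp$ have compact image in $\Bisp/\Gr$.

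For the converse, I first recover the $\Gr$-action. The restriction $\vartheta|_{\Bis(\Gr)}$ is, by \ref{en:action_from_theta1}, \ref{en:action_from_theta2}, \ref{en:action_from_theta4} and \ref{en:action_from_theta5}, a unital inverse semigroup homomorphism into~$I(Y)$ that commutes with arbitrary unions on idempotents. The standard correspondence between étale groupoid actions and such representations of the bisection inverse semigroup then yields a unique continuous $\Gr$-action on~$Y$, with anchor map $\rg\colon Y\to\Gr^0$ read off from the domains of~$\vartheta(U)$ for open $U\subseteq\Gr^0$. Next I define $\mu\colon\Bisp\times_{\s,\Gr^0,\rg}Y\to Y$ by $x\cdot y\defeq\vartheta(\Slice)(y)$ for any $\Slice\in\Bis(\Bisp)$ with $x\in\Slice$. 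Well-definedness follows because, if $x\in\Slice\cap\Slice'$, then the identity arrow at~$\s(x)$ lies in the $\Gr$-bisection $\braket{\Slice}{\Slice'}$, so \ref{en:action_from_theta2} gives $\vartheta(\Slice)^*\vartheta(\Slice')(y)=\vartheta(\braket{\Slice}{\Slice'})(y)=y$ for~$y$ with $\rg(y)=\s(x)$, whence $\vartheta(\Slice)(y)=\vartheta(\Slice')(y)$. Continuity and openness of~$\mu$ are local over bisections and inherit from the partial homeomorphisms~$\vartheta(\Slice)$.

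It remains to verify the axioms of Definition~\ref{def:correspondence_action}. Axiom \ref{en:diagram_dynamical_system_1} reduces to \ref{en:action_from_theta1} applied to bisections containing the relevant elements of~$\Gr$ and~$\Bisp$. Axiom \ref{en:diagram_dynamical_system_3} is \ref{en:action_from_theta3} after identifying the image of~$\mu$ with the union of codomains of~$\vartheta(\Slice)$ for $\Slice\in\Bis(\Bisp)$. Axiom \ref{en:diagram_dynamical_system_4} follows from \ref{en:action_from_theta6} by covering a compact $K\subseteq\Bisp/\Gr$ by finitely many images of precompact bisections and taking the union of the corresponding closed codomains. The hard part is axiom \ref{en:diagram_dynamical_system_2}: the ``if'' direction is immediate from \ref{en:action_from_theta1}, but for ``only if'', given $x\cdot y=x'\cdot y'$ with $x\in\Slice$, $x'\in\Slice'$, I apply \ref{en:action_from_theta2} to obtain $\vartheta(\braket{\Slice}{\Slice'})(y')=y$, extract the unique element $g\in\braket{\Slice}{\Slice'}$ with $\s(g)=\rg(y')$ using the bisection property, and verify $x'=x\cdot g$ by unpacking how $\braket{\Slice}{\Slice'}$ encodes the right $\Gr$-translations between~$\Slice$ and~$\Slice'$. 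The main obstacle is keeping the abstractly reconstructed $\Gr$-action in sync with the $\Bisp$-multiplication just defined, so that this matching identity genuinely produces an element of~$\Gr$ acting on~$y'$ as claimed; once this coherence is in place, axiom~\ref{en:diagram_dynamical_system_2} follows in full, and uniqueness of the action is automatic from the injectivity built into $\vartheta$ on each bisection.
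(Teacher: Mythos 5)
The paper contains no proof of this lemma: it is imported verbatim from \cite{Meyer:Diagrams_models}*{Lemma~4.4}, so there is no in-paper argument to compare yours against. Your outline does follow the route one would expect (and that the cited reference takes): recover the $\Gr$\nb-action from the inverse semigroup representation $\vartheta|_{\Bis(\Gr)}$, define $\mu$ locally by $x\cdot y\defeq\vartheta(\Slice)(y)$ for $x\in\Slice$, and translate the axioms. However, two of the steps you describe are not yet proofs.

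First, your deduction of \ref{en:diagram_dynamical_system_4} from \ref{en:action_from_theta6} is wrong as stated: condition \ref{en:action_from_theta6} does \emph{not} assert that the codomain of $\vartheta(\Slice)$ is closed, only that its closure is contained in the union $\Bisp\cdot Y$ of all codomains, so ``taking the union of the corresponding closed codomains'' is not an available move, and a finite union of sets whose closures merely lie in $\Bisp\cdot Y$ need not be closed. What has to be shown is that a limit point $z$ of $\setgiven{x\cdot y}{[x]\in K}$, which by \ref{en:action_from_theta6} can be written as $\xi\cdot\eta$ for some $\xi\in\Bisp$, $\eta\in Y$, actually satisfies $[\xi]\in K$; this requires knowing that the assignment $x\cdot y\mapsto [x]$, well defined by \ref{en:diagram_dynamical_system_2}, is continuous on $\Bisp\cdot Y$, and no such argument appears in your sketch. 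Secondly, the ``only if'' half of \ref{en:diagram_dynamical_system_2} is the crux of the converse direction, and it is precisely the point you defer (``once this coherence is in place''). Given $x\cdot y=x'\cdot y'$ with $x\in\Slice$, $x'\in\Slice'$, the element $g\in\braket{\Slice}{\Slice'}$ with $\s(g)=\rg(y')$ a priori only satisfies $x_0\cdot g=x_0'$ for \emph{some} $x_0\in\Slice$, $x_0'\in\Slice'$; identifying $x_0=x$ and $x_0'=x'$ needs shrinking the bisections and reapplying \ref{en:action_from_theta1} and \ref{en:action_from_theta2}, and this is where the actual content of the lemma sits. (A smaller point: in your well-definedness argument for $\mu$, $\vartheta(\braket{\Slice}{\Slice'})$ acts as the identity only after restricting to the open set $\braket{\Slice}{\Slice'}\cap\Gr^0$, which is where $1_{\s(x)}$ lives; this is easily repaired.) Until these gaps are filled, the proposal is an outline rather than a proof.
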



\section{Groupoid actors}
\label{sec:groupoid_actors}

Next we describe the categories of groupoid actors and proper groupoid
actors; actors are also called \emph{algebraic morphisms} or
\emph{Zakrzewski morphisms}.  These categories are due to Buneci and
Stachura~\cite{Buneci-Stachura:Morphisms_groupoids}, also in the
generality of locally compact groupoids with Haar system, where some
extra difficulties with the measures arise.  We shall only consider
the easy case of \'etale groupoids here.

\begin{definition}
  \label{def:groupoid_actor}
  Let \(\Gr\) and~\(\Gr[H]\) be \'etale groupoids.  An \emph{actor}
  \(\Gr\to \Gr[H]\) is an action of~\(\Gr\) on the arrow space
  of~\(\Gr[H]\) that commutes with the right translation action
  of~\(\Gr[H]\).  The actor is called \emph{proper} if the map
  \(\Gr[H]/\Gr[H] \cong \Gr[H]^0\to \Gr^0 \) induced by the anchor map
  \(\Gr[H]\to \Gr^0\) of the action is proper.
\end{definition}

One reason why groupoid actors are important is that they are
equivalent to certain functors between the action categories of the
groupoids:

\begin{proposition}
  \label{pro:actors_and_actions}
  Let \(\Gr\) and~\(\Gr[H]\) be two \'etale groupoids.  There is a natural
  bijection between actors \(\Gr\to \Gr[H]\) and functors from the category
  of \(\Gr[H]\)\nb-actions on topological spaces to the category of
  \(\Gr\)\nb-actions on topological spaces that preserve the underlying
  space.
\end{proposition}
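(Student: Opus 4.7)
The plan is to give explicit constructions in both directions and verify they are mutually inverse.

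First I construct a functor from an actor.  An actor provides $\Gr[H]$ with commuting left $\Gr$- and right $\Gr[H]$-actions, with $\Gr$-anchor $\alpha\colon \Gr[H]\to \Gr^0$.  Because $\alpha$ is constant on $\rg$-fibres it descends to $\alpha_*\colon \Gr[H]^0\to \Gr^0$, and $\s(g\cdot h) = \s(h)$.  Applying the commutation law $(g\cdot h)k = g\cdot(hk)$ to the factorisation $h = \rg(h)\cdot h$ yields the key identity $g\cdot h = (g\cdot \rg(h))\cdot h$ in $\Gr[H]$.  For any left $\Gr[H]$-action $Y$ with anchor $\rg_Y$, I then define a $\Gr$-action on $Y$ with anchor $\alpha_*\circ \rg_Y$ by
\[
g\cdot y \defeq (g\cdot \rg_Y(y))\cdot_{\Gr[H]} y;
\]
the source formula makes it well defined and the key identity gives the composition law.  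Since every $\Gr[H]$-equivariant map visibly commutes with this formula, the assignment is a functor preserving underlying spaces.

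For the reverse direction, I evaluate the functor $F$ on $\Gr[H]$ viewed as a left $\Gr[H]$-space via left translation, producing a left $\Gr$-action on $\Gr[H]$.  The one nontrivial step—the main obstacle of the whole proof—is to show this $\Gr$-action commutes with right translation.  The key observation is that although right translations are not themselves right-$\Gr[H]$-equivariant, they are equivariant for the \emph{left}-translation structure, which is the input $F$ actually receives.  Concretely, for each $k\in \Gr[H]$ the partial map $R_k\colon \s^{-1}(\rg(k))\to \Gr[H]$, $h\mapsto hk$, is a morphism in the category of left $\Gr[H]$-actions: its domain is left-invariant because left translation preserves $\s$-fibres, and associativity gives $R_k(k'h) = k'R_k(h)$.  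Then $F(R_k) = R_k$ is $\Gr$-equivariant, which is exactly the identity $(g\cdot h)k = g\cdot(hk)$.

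Finally, I check mutual inversion.  Going actor $\to$ functor $\to$ actor returns the original action by the key identity $g\cdot h = (g\cdot \rg(h))\cdot h$.  Going $F\to$ actor $\to$ functor yields $\tilde F$; to see $\tilde F = F$, for each $y$ in a left $\Gr[H]$-space $Y$ the orbit map $\rho_y\colon \s^{-1}(\rg_Y(y))\to Y$, $k\mapsto k\cdot_{\Gr[H]} y$, is left-$\Gr[H]$-equivariant, hence $\Gr$-equivariant under $F$; specialising this equivariance to $k=\rg_Y(y)$ forces the $\Gr$-action on $F(Y)$ to obey $g\cdot y = (g\cdot \rg_Y(y))\cdot_{\Gr[H]} y$, which is exactly the defining formula for $\tilde F(Y)$.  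Both functors act as the identity on underlying morphisms, so equality holds on morphisms as well, and naturality of the bijection is then routine.
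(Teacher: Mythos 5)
Your argument is correct, but it takes a genuinely different route from the paper only in the sense that the paper gives no direct argument at all: it cites \cite{Meyer-Zhu:Groupoids}*{Proposition~4.23} for the bijection and \cite{Meyer:Diagrams_models}*{Lemma~4.12} for the extra fact that such functors automatically preserve invariant maps. What you have written is essentially a self-contained proof of the cited result in the \'etale setting, and the two pivotal steps are handled correctly: the right translations \(R_k\) are morphisms of left \(\Gr[H]\)-spaces, so applying \(F\) to them yields the commutation law, and the orbit maps \(\rho_y\) show that \(F\) is determined by its value on the left-translation action on the arrow space. Two points deserve to be made explicit if this is to replace the citation. First, both steps silently use that \(F\) applied to the invariant open subset \(\s^{-1}(\rg(k))\) (resp.\ \(\s^{-1}(\rg_Y(y))\)) carries the \emph{restricted} \(\Gr\)\nb-action; this follows by applying \(F\) to the equivariant open inclusion into \(\Gr[H]\), and it is also what gives \(\s(g\cdot h)=\s(h)\), without which \((g\cdot h)k\) is not even defined. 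Second, ``natural'' in the statement refers to compatibility with composition of actors and of functors, so the verification you defer is that both assignments intertwine the associativity formula \((g\cdot h)\cdot k = g\cdot(h\cdot k)\) defining composite actors with composition of functors; this is easy but is part of the claim. Your approach buys self-containedness and, notably, never needs the separate statement about invariant maps, because you stay entirely inside the categories of equivariant maps; the paper's approach buys brevity at the cost of sending the reader to two other sources.
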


\begin{proof}
  This is mostly contained in
  \cite{Meyer-Zhu:Groupoids}*{Proposition~4.23}.  The extra fact that
  the functors in question automatically preserve invariant maps is
  shown in \cite{Meyer:Diagrams_models}*{Lemma~4.12}.
\end{proof}

\begin{corollary}
  \label{cor:actors_category}
  Groupoid actors form a category in a canonical way.
\end{corollary}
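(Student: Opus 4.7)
The plan is to exploit Proposition~\ref{pro:actors_and_actions} to transport the (obvious) categorical structure on functors over to actors. Concretely, let $\mathrm{Act}(\Gr)$ denote the category of $\Gr$\nb-actions on topological spaces, and let $\Phi_{\Gr,\Gr[H]}$ denote the natural bijection from actors $\Gr \to \Gr[H]$ to functors $\mathrm{Act}(\Gr[H]) \to \mathrm{Act}(\Gr)$ that preserve the underlying space (note the reversal of direction: an action of~$\Gr[H]$ pulled back along the actor becomes an action of~$\Gr$).

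Given actors $\alpha\colon \Gr \to \Gr[H]$ and $\beta\colon \Gr[H] \to \Gr[K]$, I would define their composite $\beta \circ \alpha\colon \Gr \to \Gr[K]$ as the unique actor such that
\[
  \Phi_{\Gr,\Gr[K]}(\beta\circ \alpha)
  = \Phi_{\Gr,\Gr[H]}(\alpha) \circ \Phi_{\Gr[H],\Gr[K]}(\beta);
\]
the right-hand side is a functor $\mathrm{Act}(\Gr[K]) \to \mathrm{Act}(\Gr)$ preserving the underlying space (a composite of two such functors trivially has the same property), and Proposition~\ref{pro:actors_and_actions} guarantees that there is a unique actor producing this functor. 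The identity actor on~$\Gr$ is defined, via the same bijection, to correspond to the identity functor on $\mathrm{Act}(\Gr)$; concretely, it is the left translation action of~$\Gr$ on its own arrow space.

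Once composition and identities are in place, the category axioms come for free: associativity of composition of actors reduces, via the bijections $\Phi_{\Gr,\Gr[H]}$, to associativity of composition of functors, and the unit laws reduce likewise to the unit laws for the identity functors. The only small point to verify is that these translations of functor identities genuinely yield identities of actors, which is immediate because each $\Phi_{\Gr,\Gr[H]}$ is a bijection.

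The main (minor) obstacle is bookkeeping the direction of the arrows and confirming that the natural bijection of Proposition~\ref{pro:actors_and_actions} is compatible enough to transport a category structure; this is guaranteed as soon as one checks that $\Phi$ is compatible with the identity functor and with composition in the naive sense described above. I would also remark that the same argument, applied to the subclass of proper actors, shows that proper actors are closed under composition (because a composite of proper maps between the invariant object spaces is again proper), so proper actors form a subcategory.
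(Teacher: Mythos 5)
Your proposal is correct and is essentially the paper's own argument: the proof given there likewise transports the category structure through the bijection of Proposition~\ref{pro:actors_and_actions}, observing only that composites and identities of underlying-space-preserving functors again preserve underlying spaces. Your extra remarks (uniqueness via bijectivity of the transport, the explicit identity actor as left translation, and the closure of proper actors under composition) all match statements made elsewhere in the surrounding text.
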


\begin{proof}
  The composite of two functors between the groupoid action categories
  that preserve the underlying spaces is again a functor of this form.
  And identity functors also have this extra property.
\end{proof}

Explicitly, the composite of two actors \(\Gr\to \Gr[H]\) and
\(\Gr[H]\to \Gr[K]\) is given by the unique action of~\(\Gr\) on the arrow
space of~\(\Gr[K]\) that satisfies
\((g\cdot h)\cdot k = g\cdot (h\cdot k)\) when \(g\in \Gr\),
\(h\in \Gr[H]\) and \(k\in \Gr[K]\) are composable.  The unit arrow
on~\(\Gr\) is simply the left translation action of~\(\Gr\) on its own
arrow space.

\begin{remark}
  \label{rem:actor_action_on_objects}
  Any groupoid~\(\Gr[H]\) acts on its own object space~\(\Gr[H]^0\)
  and on its arrow space~\(\Gr[H]\) by left translation, and the range
  map \(\Gr[H]\to \Gr[H]^0\) is \(\Gr[H]\)\nb-equivariant.  By
  Proposition~\ref{pro:actors_and_actions}, an actor \(\Gr\to \Gr[H]\)
  gives rise to actions of~\(\Gr\) on \(\Gr[H]\) and~\(\Gr[H]^0\) that
  make the range map \(\Gr[H]\to \Gr[H]^0\) equivariant.  In fact, the
  \(\Gr\)\nb-action on~\(\Gr[H]\) is the one that we used above to
  define the actor.  The action on~\(\Gr[H]^0\) has an anchor map
  \(\varrho\colon \Gr[H]^0 \to \Gr^0\).  Since
  \(\rg\colon \Gr[H]\to \Gr[H]^0\) is equivariant, the anchor map
  \(\Gr[H]\to \Gr^0\) must be the composite \(\varrho\circ \rg\) and
  the \(\Gr\)\nb-action on~\(\Gr[H]^0\) must satisfy
  \(g\cdot \omega = \rg(g\cdot 1_\omega)\) for all \(g\in \Gr\),
  \(\omega \in \Gr[H]^0\) with \(\s(g) = \varrho(\omega)\) because
  \(\rg(1_\omega) = \omega\).  So this formula defines the
  \(\Gr\)\nb-action on~\(\Gr[H]^0\).
\end{remark}

Actors and their composition are studied in~\cite{Meyer-Zhu:Groupoids}
in greater generality, for groupoids in a category with pretopology.
This theory treats \'etale groupoids, Lie groupoids, and other
categories of groupoids in a uniform way, but it fails to take into
account Haar systems.

\begin{proposition}
  \label{actor_to_Cstar}
  An actor \(\Gr\to \Gr[H]\) induces a nondegenerate
  \Star{}homomorphism from \(\Cst(\Gr)\) to the multiplier algebra of
  \(\Cst(\Gr[H])\).  This takes values in \(\Cst(\Gr[H])\) if and only
  if the actor is proper.  The \Star{}homomorphism maps the subalgebra
  \(\Cont_0(\Gr^0) \subseteq \Cst(\Gr)\) to \(\Contb(\Gr[H]^0)\), and
  to \(\Cont_0(\Gr[H]^0)\) in the proper case.
\end{proposition}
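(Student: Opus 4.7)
The plan is to recognize an actor $\Gr\to\Gr[H]$ as a special instance of a groupoid correspondence and then invoke the $\Cst$-algebraic consequences already developed for such correspondences.

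First, I would observe that an actor $\Gr\to\Gr[H]$ is the same data as a groupoid correspondence $\Gr[H]\colon \Gr\leftarrow \Gr[H]$. Indeed, the arrow space of $\Gr[H]$ carries commuting left $\Gr$- and right $\Gr[H]$-actions; the right translation action of $\Gr[H]$ on itself is automatically free and proper, and its anchor map is the source $\s\colon\Gr[H]\to\Gr[H]^0$, which is étale. Moreover, this correspondence is proper in the sense recalled in Section~\ref{sec:groupoid_correspondences_models} precisely when $\rg_*\colon\Gr[H]/\Gr[H]\to\Gr^0$ is proper; identifying $\Gr[H]/\Gr[H]\cong\Gr[H]^0$ via the range map of $\Gr[H]$, this is exactly the properness condition of Definition~\ref{def:groupoid_actor}.

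Next, I would apply \cite{Antunes-Ko-Meyer:Groupoid_correspondences}*{Section~7}: any groupoid correspondence yields a $\Cst$-correspondence $\Cst(\Gr[H])\colon\Cst(\Gr)\leftarrow\Cst(\Gr[H])$ with nondegenerate left action, and by \cite{Antunes-Ko-Meyer:Groupoid_correspondences}*{Theorem~7.14} this left action takes values in the compact operators if and only if the underlying correspondence is proper. Here the underlying right Hilbert $\Cst(\Gr[H])$-module is $\Cst(\Gr[H])$ over itself, whose algebras of adjointable and compact operators are $\Mult(\Cst(\Gr[H]))$ and $\Cst(\Gr[H])$, respectively. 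This immediately yields a nondegenerate \Star{}homomorphism $\Cst(\Gr)\to\Mult(\Cst(\Gr[H]))$ that lands in $\Cst(\Gr[H])$ iff the actor is proper.

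For the claim about $\Cont_0(\Gr^0)$, I would unwind the construction at the level of $\Cont_c$-functions. By Remark~\ref{rem:actor_action_on_objects}, the induced $\Gr$-action on $\Gr[H]^0$ has anchor map $\varrho\colon\Gr[H]^0\to\Gr^0$, and the anchor of the $\Gr$-action on the arrow space of $\Gr[H]$ factors as $\varrho\circ\rg$, where $\rg\colon\Gr[H]\to\Gr[H]^0$ is the range map of the groupoid. Tracing through the left-multiplication formula for the $\Cst$-correspondence of a groupoid correspondence, a unit-supported function $f\in\Cont_0(\Gr^0)\subseteq\Cst(\Gr)$ acts on $\xi\in\Cont_c(\Gr[H])$ by $(f\cdot\xi)(\gamma) = f(\varrho(\rg(\gamma)))\,\xi(\gamma)$, that is, as the central multiplier associated with the function $f\circ\varrho\in\Contb(\Gr[H]^0)$. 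In the proper case, $\varrho$ is proper, and hence $f\circ\varrho\in\Cont_0(\Gr[H]^0)$.

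The main obstacle I anticipate is the bookkeeping in this last step: one must trace carefully through the conventions of \cite{Antunes-Ko-Meyer:Groupoid_correspondences} to confirm that the restriction of the induced \Star{}homomorphism to $\Cont_0(\Gr^0)$ really coincides with multiplication by the pulled-back function. Everything else is a direct invocation of the existing theory.
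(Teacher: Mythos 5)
Your proposal is correct and follows essentially the same route as the paper: identify the actor with a groupoid correspondence $\Gr[H]\colon\Gr\leftarrow\Gr[H]$, invoke \cite{Antunes-Ko-Meyer:Groupoid_correspondences}*{Theorem~7.14} for the properness criterion, and read off from the explicit left-action formula that $f\in\Cont_0(\Gr^0)$ acts as multiplication by $f\circ\varrho$. No substantive differences from the paper's argument.
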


\begin{proof}
  This follows readily from the results
  of~\cite{Antunes-Ko-Meyer:Groupoid_correspondences}, even for
  non-Hausdorff groupoids.

  The right translation action of an \'etale groupoid on its arrow
  space is always free and proper and has an \'etale anchor map.  So a
  left action of~\(\Gr\) on \(\Bisp=\Gr[H]\) that commutes with the
  right translation action of~\(\Gr[H]\) gives rise to a groupoid
  correspondence.  The associated Hilbert \(\Cst(\Gr[H])\)-module
  \(\Cst(\Bisp)\) is simply the \(\Cst\)\nb-algebra \(\Cst(\Gr[H])\)
  with its canonical Hilbert module structure.  The adjointable and
  compact operators on \(\Cst(\Gr[H])\) are naturally isomorphic to
  \(\Mult(\Cst(\Gr[H]))\) and \(\Cst(\Gr[H])\), respectively.  Thus
  the left action of \(\Cst(\Gr)\) on \(\Cst(\Bisp)\) for a groupoid
  correspondence specialises to a nondegenerate \Star{}homomorphism
  from \(\Cst(\Gr)\) to \(\Mult(\Cst(\Gr[H]))\).  By
  \cite{Antunes-Ko-Meyer:Groupoid_correspondences}*{Theorem~7.14},
  this \Star{}homomorphism takes values in \(\Cst(\Gr[H])\) if and
  only if the left action induces a proper map from
  \(\Bisp/\Gr[H] \cong \Gr[H]^0\) to~\(\Gr^0\), that is, the actor is
  proper.

  If \(f\in\Cont_0(\Gr^0)\), then it acts on \(\Cst(\Gr[H])\) by
  pointwise multiplication with the function \(f\circ \varrho\), where
  \(\varrho\colon\Gr[H]^0 \to \Gr^0\) is the anchor map in
  Remark~\ref{rem:actor_action_on_objects}.  This follows easily from
  the formula for the left action in
  \cite{Antunes-Ko-Meyer:Groupoid_correspondences}*{Equation~(7.3)}.
  Thus the \Star{}homomorphism \(\Cst(\Gr) \to \Mult(\Cst(\Gr[H]))\)
  maps \(\Cont_0(\Gr^0)\) to \(\Contb(\Gr[H]^0)\), and to
  \(\Cont_0(\Gr[H]^0)\) in the proper case.
\end{proof}

Proposition~\ref{actor_to_Cstar} shows that a proper actor between two
groupoids induces a \Star{}homomorphism between the \(\K\)\nb-theory
groups of their groupoid \(\Cst\)\nb-algebras.  If the actor is not
proper, this usually fails.

An actor is invertible in the category of actors if and only if it is
a groupoid isomorphism turned into an actor
(see~\cite{Meyer-Zhu:Groupoids}).  It is easy to see that a groupoid
isomorphism is invertible also as an actor.  In general, we do not
know whether an actor \(\Gr\to\Gr[H]\) must be an isomorphism if it
induces an isomorphism \(\Cst(\Gr) \to \Cst(\Gr[H])\) on the groupoid
\(\Cst\)\nb-algebras.  Using Renault's theory of Cartan inclusions, we
may prove the following special case:

\begin{theorem}
  \label{the:Cartan_isomorphism}
  Let \(\Gr\) and~\(\Gr[H]\) be \'etale, locally compact groupoids and
  assume~\(\Gr\) to be effective and Hausdorff.  If an actor
  \(\Gr\to\Gr[H]\) induces an isomorphism
  \(\Cst(\Gr) \to \Cst(\Gr[H])\), then it is a groupoid isomorphism.
\end{theorem}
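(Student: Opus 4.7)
The plan is to invoke Renault's reconstruction theorem for Cartan pairs on the isomorphism $\Phi\colon \Cst(\Gr)\to \Cst(\Gr[H])$ induced by the actor, recover from it a groupoid isomorphism $\Gr\congto \Gr[H]$, and then verify that this recovered isomorphism coincides with the given actor.

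First, since $\Phi$ is an \emph{isomorphism}, its image lies inside $\Cst(\Gr[H])$ rather than merely in the multiplier algebra, so by Proposition~\ref{actor_to_Cstar} the actor is proper and $\Phi$ restricts to a map $\Cont_0(\Gr^0)\to \Cont_0(\Gr[H]^0)$. Because $\Gr$ is effective and Hausdorff, Renault's theorem exhibits $\Cont_0(\Gr^0)$ as a Cartan subalgebra of $\Cst(\Gr)$, and in particular as a maximal abelian subalgebra. Its image $\Phi(\Cont_0(\Gr^0))$ is therefore maximal abelian in $\Cst(\Gr[H])$; since it is contained in the abelian subalgebra $\Cont_0(\Gr[H]^0)$, it must in fact equal $\Cont_0(\Gr[H]^0)$. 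This identifies $\Cont_0(\Gr[H]^0)$ as a Cartan subalgebra of $\Cst(\Gr[H])$, so by the uniqueness part of Renault's reconstruction theorem, $\Gr[H]$ is itself effective and Hausdorff, and the diagonal-preserving isomorphism $\Phi$ is induced by a unique groupoid isomorphism $\Psi\colon \Gr\congto \Gr[H]$.

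The main obstacle I expect is the final step: identifying this $\Psi$, viewed as an actor through left translation, with the original actor. Both yield the same \Star{}homomorphism~$\Phi$, so it suffices to show that an actor $\Gr\to \Gr[H]$ is determined by the \Star{}homomorphism it induces. To this end, I would unwind the description of the induced left action of $\Cst(\Gr)$ on $\Cst(\Bisp)=\Cst(\Gr[H])$ used in Proposition~\ref{actor_to_Cstar}: for a compact-open bisection $\Slice\subseteq\Gr$ the element $\Phi(1_\Slice)$ acts by left multiplication on $\Cst(\Gr[H])$ and equals the characteristic function of the compact-open bisection $\Slice\cdot \Gr[H]^0\subseteq \Gr[H]$ assigned to $\Slice$ by the actor. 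Applying this both to the original actor and to~$\Psi$ gives the same bisection of $\Gr[H]$, because $\Phi$ is injective on characteristic functions of compact-open bisections. Since the $\Gr$\nb-action on the arrow space of $\Gr[H]$ is determined by these assignments via Lemma~\ref{lem:action_from_theta}, the two actors coincide and $\Psi$ is the groupoid isomorphism underlying the original actor.
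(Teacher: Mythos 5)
Your proposal follows essentially the same route as the paper: show $\Cont_0(\Gr^0)$ is Cartan (hence maximal abelian) using effectiveness and Hausdorffness, deduce that the isomorphism carries it onto $\Cont_0(\Gr[H]^0)$, so the latter is Cartan too, and then invoke Renault's reconstruction. The only difference is that you spell out the final identification of the reconstructed isomorphism with the given actor via its action on bisections, where the paper compresses this into the functoriality of the Weyl-groupoid construction; this is a welcome elaboration rather than a different argument.
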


\begin{proof}
  The assumptions imply that \(\Cont_0(\Gr^0) \subseteq \Cst(\Gr)\) is
  a Cartan subalgebra (the separability assumption
  in~\cite{Renault:Cartan.Subalgebras} may be left out, compare
  \cite{Kwasniewski-Meyer:Cartan}*{Corollary~7.6}).  In particular,
  \(\Cont_0(\Gr^0)\) is maximal abelian in \(\Cst(\Gr)\).  Since the
  isomorphism \(\Cst(\Gr) \to \Cst(\Gr[H])\) maps \(\Cont_0(\Gr^0)\)
  to \(\Cont_0(\Gr[H]^0)\) and the latter is Abelian, it follows that
  \(\Cont_0(\Gr^0)\) is mapped isomorphically onto
  \(\Cont_0(\Gr[H]^0)\).  Thus
  \(\Cont_0(\Gr[H]^0) \subseteq \Cst(\Gr[H])\) is a Cartan subalgebra
  as well and the actor induces an isomorphism of Cartan subalgebras.
  Since the construction of a twisted groupoid from a Cartan
  subalgebra is functorial, it follows that our actor is a groupoid
  isomorphism.
\end{proof}

\section{Actors out of groupoid models}
\label{sec:actors_mod}

Throughout this section, let \(\Gr\) and~\(\Gr[H]\) be groupoids, let
\(\Reg\subseteq \Gr^0\) be open and \(\Gr\)\nb-invariant, and let
\(\Bisp\colon \Gr\leftarrow \Gr\) be a groupoid correspondence that is
regular on~\(\Reg\).  Let~\(\Mod\) be the groupoid model of
\((\Gr,\Bisp,\Reg)\).  We want to describe the proper groupoid actors
\(\Mod\to \Gr[H]\), using the definition of the groupoid model in
Definition~\ref{def:groupoid_model}.

Recall that given a proper actor \(\Mod\to \Gr[H]\), there is an
induced \(\Mod\)\nb-action on~\(\Gr[H]^0\), the range map
\(\Gr[H]\to \Gr[H]^0\) is \(\Mod\)\nb-equivariant, and the anchor map
\(\Gr[H]^0 \to \Mod^0\) is proper.  This implies the technical
condition~\ref{en:diagram_dynamical_system_4} by
\cite{Meyer:Groupoid_models_relative}*{Lemma~3.7}.  So we may
disregard this condition for proper actors.

By definition, an action of~\(\Mod\) on~\(\Gr[H]\) is equivalent to an
action of~\(\Gr\) on~\(\Gr[H]\) together with a map
\(\Bisp\times_{\s,\Gr^0,\rg} \Gr[H]^0 \to \Gr[H]^0\) subject to some conditions.  We
denote both the \(\Gr\)\nb-action and the map
\(\Bisp\times_{\s,\Gr^0,\rg} \Gr[H]^0 \to \Gr[H]^0\) multiplicatively in the
following.  We first examine when the \(\Mod\)\nb-action induced by
the maps above commutes with the right translation action
of~\(\Gr[H]\):

\begin{lemma}
  \label{lem:actor_commutation}
  The action of~\(\Mod\) on~\(\Gr[H]\) induced by an action of
  \((\Gr,\Bisp,\Reg)\) on~\(\Gr[H]\) commutes with the right
  translation action of~\(\Gr[H]\) if and only if
  \begin{itemize}
  \item the \(\Gr\)\nb-action commutes with the \(\Gr[H]\)\nb-action,
    that is, it is an actor \(\Gr\to \Gr[H]\);
  \item the \(\Bisp\)\nb-action commutes with the
    \(\Gr[H]\)\nb-action, that is, its anchor map
    \(\rg_{\Gr^0}\colon \Gr[H]\to \Gr^0\) factors as
    \(\varrho\circ \rg_{\Gr[H]^0}\) with the range map
    \(\rg_{\Gr[H]^0}\colon \Gr[H]\to \Gr[H]^0\) and some continuous
    map \(\varrho\colon \Gr[H]^0 \to \Gr^0\) and
    \((x\cdot h_1)\cdot h_2 = x\cdot (h_1\cdot h_2)\) for all
    \(x\in \Bisp\), \(h_1,h_2\in \Gr[H]\) with
    \(\s(x) = \rg_{\Gr^0}(h_1)\) and \(\s(h_1) = \rg(h_2)\).
  \end{itemize}
\end{lemma}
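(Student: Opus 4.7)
My plan is to unpack the commutation condition along the decomposition of an $(\Gr,\Bisp,\Reg)$\nb-action on $Y=\Gr[H]$ from Definition~\ref{def:correspondence_action} into its two components: a $\Gr$\nb-action on $\Gr[H]$ and the multiplication map $\mu\colon \Bisp\times_{\s,\Gr^0,\rg_{\Gr^0}} \Gr[H]\to \Gr[H]$. The bisection description in Lemma~\ref{lem:action_from_theta} makes this concrete: the $\Mod$\nb-action is encoded by a map $\vartheta\colon \Bis \to I(\Gr[H])$ with $\Bis=\Bis(\Gr)\sqcup \Bis(\Bisp)$, and the $\Mod$\nb-action commutes with right $\Gr[H]$\nb-translation precisely when each partial homeomorphism $\vartheta(\Slice)$ does, because these generate the partial transformations coming from $\Mod$.

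For the forward direction, I would first restrict to $\Slice\in \Bis(\Gr)$. Commutation of $\vartheta(\Slice)$ with right $\Gr[H]$\nb-translation says exactly that the $\Gr$\nb-action on $\Gr[H]$ commutes with right translation, i.e.\ it is an actor $\Gr\to \Gr[H]$, which is the first bullet. Next I would restrict to $\Slice\in \Bis(\Bisp)$. The domain of $\vartheta(\Slice)$ is the open subset of $\Gr[H]$ cut out by the anchor condition $\rg_{\Gr^0}(h)=\s(x)$ for some $x\in \Slice$; stability of this domain under right $\Gr[H]$\nb-multiplication forces $\rg_{\Gr^0}(h)=\rg_{\Gr^0}(h\cdot h')$ whenever $\s(h)=\rg(h')$, so $\rg_{\Gr^0}$ is constant on the fibres of $\rg_{\Gr[H]^0}\colon \Gr[H]\to \Gr[H]^0$ and factors as $\varrho\circ \rg_{\Gr[H]^0}$. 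On the stable domain the commutation then reads $(x\cdot h_1)\cdot h_2=x\cdot(h_1\cdot h_2)$, completing the second bullet.

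For the backward direction, I would assume both bullets and run the argument in reverse. The hypotheses say precisely that $\vartheta(\Slice)$ commutes with right $\Gr[H]$\nb-translation for $\Slice\in \Bis(\Gr)$ (actor condition) and for $\Slice\in \Bis(\Bisp)$ (second bullet). Since commutation with right $\Gr[H]$\nb-translation is preserved by the inverse-semigroup operations in $I(\Gr[H])$, it passes to everything in the image of $\vartheta$ and hence to the $\Mod$\nb-action on $\Gr[H]$. The main technical step I anticipate is making this last extension precise: one must verify that commutation with right translation is preserved under the products, adjoints and unions appearing in Lemma~\ref{lem:action_from_theta}, and that the anchor-factorization in the second bullet supplies the compatibility on $\Gr[H]^0$ needed for the induced $\Mod$\nb-action to be well defined. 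This is routine but requires careful bookkeeping with the partially-defined products in $\Bis$; beyond that, the proof is a direct translation of definitions.
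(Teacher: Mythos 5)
Your proposal is correct in outline, but it decomposes the problem on the opposite side from the paper. You slice the \(\Mod\)\nb-action on \(\Gr[H]\) into the generating partial homeomorphisms \(\vartheta(\Slice)\) for \(\Slice\in\Bis(\Gr)\sqcup\Bis(\Bisp)\) (via Lemma~\ref{lem:action_from_theta} and the construction of~\(\Mod\) as a transformation groupoid of an inverse semigroup action) and test commutation with right translation generator by generator, using that this commutation property is closed under products, adjoints and unions of partial homeomorphisms. The paper instead slices the right translation action of~\(\Gr[H]\) into its bisections: commutation is rephrased as ``\(\s\colon\Gr[H]\to\Gr[H]^0\) is \(\Mod\)\nb-invariant and right multiplication \(\vartheta_V\) by each bisection \(V\subseteq\Gr[H]\) is \(\Mod\)\nb-equivariant'', and then the universal property of the groupoid model (a map is \(\Mod\)\nb-invariant, respectively equivariant, if and only if it is \((\Gr,\Bisp)\)\nb-invariant, respectively equivariant, quoting \cite{Meyer:Diagrams_models}*{Lemma~4.12}) converts these into conditions on the \(\Gr\)- and \(\Bisp\)\nb-actions directly. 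Your route trades the universal property for the explicit inverse-semigroup construction; it spares you the paper's step of disposing of the technical clause \(\varphi^{-1}(\Bisp\cdot Y_2)=\Bisp\cdot Y_1\) in the definition of equivariance, but it obliges you to carry out the closure argument that you flag at the end, which is indeed routine. One small correction: the factorisation \(\rg_{\Gr^0}=\varrho\circ\rg_{\Gr[H]^0}\) should not be extracted from the domains of \(\vartheta(\Slice)\) for \(\Slice\in\Bis(\Bisp)\) alone, since those domains are \(\rg_{\Gr^0}^{-1}(\s(\Slice))\) and therefore only constrain \(\rg_{\Gr^0}\) over \(\s(\Bisp)\subseteq\Gr^0\); use instead the right-invariance of the domains \(\rg_{\Gr^0}^{-1}(U)\) for open \(U\subseteq\Gr^0\) (the idempotents in \(\Bis(\Gr)\)), or simply observe that the first bullet already forces the anchor map of an actor to be invariant under right translation and hence to descend to \(\Gr[H]^0\), as in Remark~\ref{rem:actor_action_on_objects}.
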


\begin{proof}
  If the action of~\(\Mod\) commutes with that of~\(\Gr[H]\), then the
  anchor map \(\s\colon \Gr[H]\to \Gr[H]^0\) of the
  \(\Gr[H]\)\nb-action must be \(\Mod\)\nb-invariant and for any
  bisection~\(V\) in~\(\Gr[H]\), the partial map
  \(\vartheta_V\colon \s^{-1}(\rg(V)) \to \s^{-1}(\s(V))\) of right
  multiplication by~\(V\) is \(\Mod\)\nb-equivariant; this involves
  the restrictions of the \(\Mod\)\nb-action to the open subsets
  \(\s^{-1}(\rg(V))\) and \(\s^{-1}(\s(V))\), which are
  \(\Mod\)\nb-invariant because~\(\s\) is \(\Mod\)\nb-invariant.  Now
  a property of the groupoid model is that a map
  \(f\colon \Gr[H]\to Z\) is \(\Mod\)\nb-invariant if and only if
  \(f(g\cdot h) = f(h)\) and \(f(x\cdot h)= f (h)\) for all
  \(g\in \Gr\), \(h\in \Gr[H]\), \(x\in \Bisp\) with
  \(\s(g) = \s(x) = \rg_{\Gr^0}(h)\).  For the absolute groupoid
  model, this is shown in \cite{Meyer:Diagrams_models}*{Lemma~4.12},
  and the proof is purely formal and therefore carries over to the
  relative case as well.  It is part of the definition of a groupoid
  model that a map between \(\Mod\)\nb-actions is equivariant if and
  only if it is \((\Gr,\Bisp)\)-equivariant.  So the actions of
  \(\Mod\) and~\(\Gr[H]\) commute if and only if
  \(\s\colon \Gr[H]\to \Gr[H]^0\) is invariant and the right
  multiplication by bisections is \((\Gr,\Bisp)\)-equivariant.

  Since~\(\Gr\) is a groupoid, we may argue similarly but backwards
  for the left action of~\(\Gr\).  So \(\s\colon \Gr[H]\to \Gr[H]^0\)
  is \(\Gr\)\nb-invariant and the right multiplication by bisections
  is \(\Gr\)\nb-equivariant if and only if the actions of \(\Gr\)
  and~\(\Gr[H]\) commute.  If the action of~\(\Bisp\) commutes with
  the right multiplication by bisections, this forces the anchor map
  \(\rg_{\Gr^0}\colon \Gr[H]\to \Gr^0\) to factor through the orbit
  space projection of the right multiplication map, which is
  equivalent to \(\rg\colon \Gr[H]\to \Gr[H]^0\).  So we get the
  factorisation of~\(\rg_{\Gr^0}\) through a continuous map
  \(\varrho\colon \Gr[H]^0 \to \Gr^0\).  Recall that a bisection
  \(\Slice\subseteq\Gr[H]\) acts on~\(\Gr[H]\) on the right by the partial
  homeomorphism \(\s^{-1}(\rg(\Slice)) \to \s^{-1}(\s(\Slice))\),
  \(g \mapsto g\cdot h\), for the unique \(h\in\Slice\) with
  \(\rg(h) = \s(g)\).  Therefore, the left multiplication by elements
  of~\(\Bisp\) commutes with~\(\vartheta_{\Slice}\) for all bisections
  \(\Slice\subseteq\Gr[H]\) if and only if
  \((x\cdot h_1)\cdot h_2 = x\cdot (h_1\cdot h_2)\) holds whenever
  \(\s(x) = \rg_{\Gr[H]^0}(h_1)\) and \(\s(h_1) = \rg(h_2)\).  The
  technical condition
  \(\varphi^{-1}(\Bisp\cdot Y_2) = \Bisp\cdot Y_1\) for a
  \((\Gr,\Bisp)\)-equivariant map \(Y_1 \to Y_2\) in
  Definition~\ref{def:correspondence_action} holds automatically here
  because the relevant maps~\(\vartheta_{\Slice}\) are invertible and
  their inverse~\(\vartheta_{\Slice^*}\) is of the same form
  as~\(\vartheta_{\Slice}\).
\end{proof}

\begin{theorem}
  \label{the:actor_from_groupoid_model}
  Let \(\Gr\) and~\(\Gr[H]\) be groupoids, let \(\Reg\subseteq \Gr^0\)
  be open and \(\Gr\)\nb-invariant, and let
  \(\Bisp\colon \Gr\leftarrow \Gr\) be a groupoid correspondence that
  is regular on~\(\Reg\).  Consider a triple of continuous map
  \[
    \varrho\colon \Gr[H]^0 \to \Gr^0,\qquad
    \mu_{\Gr}\colon \Gr\times_{\s,\Gr^0,\varrho} \Gr[H]^0 \to \Gr[H],\qquad
    \mu_{\Bisp}\colon \Bisp\times_{\s,\Gr^0,\varrho } \Gr[H]^0 \to \Gr[H],
  \]
  such that~\(\mu_{\Bisp}\) is open and
  \[
    \s\circ \mu_{\Gr}(g,\omega) = \omega,\quad
    \s\circ \mu_{\Bisp}(x,\omega) = \omega,
    \quad
    \varrho\circ \rg\circ \mu_{\Gr}(g, \omega) =\rg(g),\quad
    \varrho\circ \rg\circ \mu_{\Bisp}(x, \omega) =\rg(x)
  \]
  for all \(g\in \Gr\), \(x\in \Bisp\), \(\omega \in \Gr[H]^0\) with
  \(\s(g) = \varrho(\omega)\) and \(\s(x) = \varrho(\omega)\).  Then
  define multiplication maps
  \(\Gr\times_{\s,\Gr^0,\varrho\rg} \Gr[H] \to \Gr[H]\) and
  \(\Bisp\times_{\s,\Gr^0,\varrho\rg} \Gr[H] \to \Gr[H]\) by
  \(g\cdot h \defeq \mu_{\Gr}(g, \rg(h)) \cdot h\) and
  \(x\cdot h \defeq \mu_{\Bisp}(x, \rg(h)) \cdot h\).  This data
  defines an actor \(\Mod\to \Gr[H]\) if and only if the following
  conditions hold:
  \begin{enumerate}[label=\textup{(\ref*{the:actor_from_groupoid_model}.\arabic*)},
    leftmargin=*,labelindent=0em]
  \item \label{en:actor_from_groupoid_model_1}%
    \(g_1\cdot (g_2\cdot h) = (g_1\cdot g_2)\cdot h\),
    \(g\cdot (x\cdot h) = (g\cdot x)\cdot h\), and
    \(x\cdot (g\cdot h) = (x\cdot g)\cdot h\) when
    \(g_1,g_2,g\in \Gr\), \(x\in \Bisp\), \(h\in\Gr[H]\) and these
    triples are composable;
  \item \label{en:actor_from_groupoid_model_1b}%
    \(1_{\varrho(\omega)} \cdot \omega = \omega\) for all
    \(\omega \in \Gr[H]^0\), where \(1_{\varrho(\omega)}\) is the unit
    arrow in~\(\Gr\);
  \item \label{en:actor_from_groupoid_model_2}%
    if \(x_1\cdot y_1 = x_2\cdot y_2\) then there is \(g\in \Gr\) with
    \(x_2 = x_1\cdot g\) and \(y_1 = g\cdot y_2\);
  \item \label{en:actor_from_groupoid_model_3}%
    the image of \(\rg\circ \mu_{\Bisp}\) contains
    \(\varrho^{-1}(\Reg)\);
  \item \label{en:actor_from_groupoid_model_4}%
    if \(K\subseteq \Bisp/\Gr\) is compact, then the set of all
    \(\rg(x\cdot h)\in\Gr[H]^0\) with \(x\in \Bisp\), \(h\in\Gr[H]\),
    \([x]\in K\), \(\s(x)=\rg(h)\) is closed in~\(\Gr[H]^0\).
  \end{enumerate}
  Any actor \(\Mod\to \Gr[H]\) is of this form for a unique triple of
  maps \((\varrho,\mu_{\Gr},\mu_{\Bisp})\).  The actor is proper if
  and only if \(\varrho\colon \Gr[H]^0 \to \Gr^0\) is a proper map.
  In that case, condition~\ref{en:actor_from_groupoid_model_4} is
  redundant.
\end{theorem}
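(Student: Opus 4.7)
The plan is to combine the universal property of the groupoid model (Definition~\ref{def:groupoid_model}) with Lemma~\ref{lem:actor_commutation}. By definition, an actor $\Mod\to\Gr[H]$ is a $\Mod$-action on the arrow space $\Gr[H]$ commuting with right translation by $\Gr[H]$. The universal property of $\Mod$ translates $\Mod$-actions on $\Gr[H]$ into $(\Gr,\Bisp,\Reg)$-actions on $\Gr[H]$ in the sense of Definition~\ref{def:correspondence_action}, and Lemma~\ref{lem:actor_commutation} further identifies those $(\Gr,\Bisp,\Reg)$-actions that commute with right translation: they consist of a $\Gr$-action that is itself an actor $\Gr\to\Gr[H]$ together with a $\Bisp$-multiplication whose anchor factors as $\varrho\circ\rg_{\Gr[H]^0}$ for some continuous $\varrho\colon\Gr[H]^0\to\Gr^0$ and which is compatible with right multiplication in $\Gr[H]$. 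The task is then to rewrite this data and its axioms in terms of the triple $(\varrho,\mu_{\Gr},\mu_{\Bisp})$.

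To get the bijection with triples, I use that, by Lemma~\ref{lem:actor_commutation}, the $\Gr$- and $\Bisp$-multiplications on $\Gr[H]$ are completely determined by their restriction to the unit space $\Gr[H]^0\subseteq\Gr[H]$. Given an actor, I set $\mu_{\Gr}(g,\omega)\defeq g\cdot 1_\omega$ and $\mu_{\Bisp}(x,\omega)\defeq x\cdot 1_\omega$, and let $\varrho$ be the anchor of the induced $\Gr$-action on $\Gr[H]^0$ from Remark~\ref{rem:actor_action_on_objects}. Equivariance of the range map $\rg\colon\Gr[H]\to\Gr[H]^0$ then forces $g\cdot h=\mu_{\Gr}(g,\rg(h))\cdot h$ and $x\cdot h=\mu_{\Bisp}(x,\rg(h))\cdot h$, and the stated anchor compatibilities become the composability and equivariance identities of the underlying actions. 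Conversely, any triple subject to those compatibilities defines multiplication maps by the displayed formulas, and one checks routinely that they yield an action of $\Gr$ together with a $\Bisp$-multiplication on $\Gr[H]$ of the type produced by Lemma~\ref{lem:actor_commutation}.

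The next step is to match each axiom of Definition~\ref{def:correspondence_action} with the corresponding condition in the theorem. Axiom~\ref{en:diagram_dynamical_system_1} splits into the associativity identities of~\ref{en:actor_from_groupoid_model_1} and the unit identity~\ref{en:actor_from_groupoid_model_1b}; right $\Gr[H]$-equivariance reduces the general identities to their values on unit arrows of $\Gr[H]$, and the condition $\rg(x\cdot h)=\rg(x)$ is already built into the anchor equations on~$\mu_{\Bisp}$. Axiom~\ref{en:diagram_dynamical_system_2} on $\Gr[H]$ likewise reduces to its value on unit arrows, which is exactly~\ref{en:actor_from_groupoid_model_2}. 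Axiom~\ref{en:diagram_dynamical_system_3} is equivalent to~\ref{en:actor_from_groupoid_model_3}, because $\rg^{-1}(\Reg)\subseteq\Gr[H]$ lies in the image of the $\Bisp$-multiplication if and only if its image under $\rg$, namely $\varrho^{-1}(\Reg)\subseteq\Gr[H]^0$, lies in the image of $\rg\circ\mu_{\Bisp}$. Axiom~\ref{en:diagram_dynamical_system_4} descends to~\ref{en:actor_from_groupoid_model_4} because right multiplication by $\Gr[H]$ is a free, proper, \'etale equivalence relation with orbit space $\Gr[H]^0$, so a saturated subset of $\Gr[H]$ is closed if and only if its image in $\Gr[H]^0$ is closed. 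Openness of the $\Bisp$-multiplication on $\Gr[H]$ matches openness of~$\mu_{\Bisp}$ by the same local homeomorphism structure.

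For the properness assertion, the induced $\Mod$-action on $\Gr[H]^0$ has anchor $\varrho$, so by definition the actor is proper exactly when $\varrho$ is proper; in that case the remark immediately preceding the theorem makes condition~\ref{en:actor_from_groupoid_model_4} automatic. The main point requiring care is the systematic passage between statements formulated on $\Gr[H]$ and their counterparts on $\Gr[H]^0$, which relies throughout on $\Gr[H]\to\Gr[H]^0$ being a free, proper, \'etale orbit quotient.
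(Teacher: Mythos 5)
Your proposal is correct and follows essentially the same route as the paper: it reduces to Lemma~\ref{lem:actor_commutation} via the universal property of~$\Mod$, recovers the triple by restricting the multiplication maps to unit arrows ($\mu_{\Bisp}(x,\omega)=x\cdot 1_\omega$, etc.), matches conditions \ref{en:actor_from_groupoid_model_1}--\ref{en:actor_from_groupoid_model_4} with the axioms of Definition~\ref{def:correspondence_action}, and descends statements from~$\Gr[H]$ to~$\Gr[H]^0$ using that the latter is the orbit space of the right translation action. No gaps.
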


\begin{proof}
  An actor \(\Mod\to \Gr[H]\) is described in
  Lemma~\ref{lem:actor_commutation} through an anchor map of the form
  \(\varrho\circ \rg\) for a continuous map
  \(\varrho\colon \Gr[H]^0\to \Gr^0\) and left multiplication maps
  \(\Gr\times_{\s,\Gr^0,\varrho \rg} \Gr[H] \to \Gr[H]\) and
  \(\Bisp\times_{\s,\Gr^0,\varrho \rg} \Gr[H] \to \Gr[H]\) that
  commute in a suitable sense with the right multiplication action
  of~\(\Gr[H]\).  Now define
  \(\mu_{\Gr}(g,\omega) \defeq g\cdot 1_\omega\) and
  \(\mu_{\Bisp}(x,\omega) \defeq x\cdot 1_\omega\).  This provides
  continuous maps with \(\s\circ \mu_{\Gr}(g,\omega) = \omega\) and
  \(\s\circ \mu_{\Bisp}(x,\omega) = \omega\) because the source map is
  invariant for the left action.  These continuous maps determine our
  left action through
  \(g\cdot h = (g\cdot 1_{\rg(h)}) \cdot h = \mu_{\Gr}(g,\rg(h))\cdot
  h\) and
  \(x\cdot h = (x\cdot 1_{\rg(h)}) \cdot h =
  \mu_{\Bisp}(x,\rg(h))\cdot h\).  The conditions
  \(\varrho\circ \rg\circ \mu_{\Gr}(g, \omega) =\rg(g)\) and
  \(\varrho\circ \rg\circ \mu_{\Bisp}(x, \omega) =\rg(x)\) are
  equivalent to \(\varrho\circ \rg(g\cdot h) =\rg(g)\) and
  \(\varrho\circ \rg(x\cdot h) =\rg(x)\), which are
  required for an action of \((\Gr,\Bisp,\Reg)\).  Since~\(\Gr[H]\) is
  \'etale, the map~\(\mu_{\Bisp}\) is open if and only if the
  multiplication map \(\Bisp\times_{\s,\varrho} \Gr[H] \to \Gr[H]\) is
  open.  Conditions
  \ref{en:actor_from_groupoid_model_1}--\ref{en:actor_from_groupoid_model_2}
  hold if and only if the left multiplication by~\(\Gr\) is a groupoid
  action and the left multiplication by~\(\Bisp\) satisfies
  \ref{en:diagram_dynamical_system_1} and
  \ref{en:diagram_dynamical_system_2}.  Conditions
  \ref{en:actor_from_groupoid_model_3} and
  \ref{en:actor_from_groupoid_model_4} are equivalent to
  \ref{en:diagram_dynamical_system_3}
  and~\ref{en:diagram_dynamical_system_4} for the action
  on~\(\Gr[H]^0\), respectively.  This is equivalent to the same
  conditions for the action on~\(\Gr[H]\) because~\(\Gr[H]^0\) is the
  orbit space for the right multiplication action of~\(\Gr[H]\) on
  itself and the sets occuring in
  \ref{en:actor_from_groupoid_model_3} and
  \ref{en:actor_from_groupoid_model_4} are \(\Gr[H]\)\nb-invariant.
  The last condition is redundant for a proper actor by
  \cite{Meyer:Diagrams_models}*{Lemma~3.7}.
\end{proof}

Actions of \((\Gr,\Bisp,\Reg)\) on a space~\(Y\) are described in
Lemma~\ref{lem:action_from_theta} using actions of bisections in
\(\Gr\) and~\(\Bisp\) by partial homeomorphisms.  In the situation of
an actor, these partial homeomorphisms are, in fact, given through
left multiplication by bisections in~\(\Gr[H]\):

\begin{theorem}
  \label{the:actor_from_groupoid_model_bisections}
  Let \(\Gr\) and~\(\Gr[H]\) be groupoids, let \(\Reg\subseteq \Gr^0\)
  be open and \(\Gr\)\nb-invariant, and let
  \(\Bisp\colon \Gr\leftarrow \Gr\) be a groupoid correspondence that
  is regular on~\(\Reg\).  Let~\(\Mod\) be the groupoid model of
  \((\Gr,\Bisp,\Reg)\).  An actor \(\Mod\to \Gr[H]\) is equivalent to
  a map
  \(\vartheta\colon \Bis(\Gr) \sqcup \Bis(\Bisp) \to \Bis(\Gr[H])\)
  such that
  \begin{enumerate}[label=\textup{(\ref*{the:actor_from_groupoid_model_bisections}.\arabic*)},
    leftmargin=*,labelindent=0em]
  \item \label{en:actor_from_groupoid_model_bisections_1}%
    \(\vartheta_V \vartheta_W = \vartheta_{V W}\) if \(V\) and~\(W\)
    are bisections in \(\Gr\) or~\(\Bisp\) and at most one of them is
    a bisection in~\(\Bisp\);
  \item\label{en:actor_from_groupoid_model_bisections_2}%
    \(\vartheta_V^* \vartheta_W = \vartheta_{\braket{V}{W}}\) if
    \(V\) and~\(W\) are bisections in~\(\Bisp\);
  \item \label{en:actor_from_groupoid_model_bisections_3}%
    \(\vartheta_{\Gr^0} = \Gr[H]^0\);
  \item \label{en:actor_from_groupoid_model_bisections_4}%
    the restriction of~\(\vartheta\) to open subsets of~\(\Gr^0\)
    commutes with arbitrary unions;
  \item \label{en:actor_from_groupoid_model_bisections_5}%
    the union of the open sets
    \(\rg(\vartheta_{\Slice})\subseteq \Gr[H]^0\) for
    \(\Slice \in \Bis(\Bisp)\) contains \(\rg(\vartheta_{\Reg})\);
  \item \label{en:actor_from_groupoid_model_bisections_6}%
    if \(\Slice\in\Bis(\Bisp)\) is precompact in~\(\Bisp\), then the
    closure of the codomain of \(\vartheta(\Slice)\) is contained in the
    union of the codomains of~\(\vartheta(W)\) for \(W\in\Bis(\Bisp)\).
  \end{enumerate}
  The actor is proper if and only if
  \(\varrho\colon \Gr[H]^0 \to \Gr^0\) is a proper map.  In that case,
  condition~\ref{en:actor_from_groupoid_model_bisections_6} is
  redundant.
\end{theorem}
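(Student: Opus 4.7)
The plan is to combine the characterisation of actors via Lemma~\ref{lem:actor_commutation} with Lemma~\ref{lem:action_from_theta} applied to $Y=\Gr[H]$, and then translate the partial homeomorphisms produced by the latter into bisections of~\(\Gr[H]\).

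First, by Lemma~\ref{lem:actor_commutation} an actor \(\Mod\to\Gr[H]\) is the same as a \((\Gr,\Bisp,\Reg)\)-action on the arrow space \(Y=\Gr[H]\) that commutes with the right translation action of~\(\Gr[H]\). By Lemma~\ref{lem:action_from_theta}, such an action is equivalent to a map \(\vartheta\colon \Bis\to I(\Gr[H])\) satisfying \ref{en:action_from_theta1}--\ref{en:action_from_theta6}. So I only need to identify the $\Gr[H]$-equivariant elements of~\(I(\Gr[H])\) (for right translation) with bisections of~\(\Gr[H]\).

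The key step is therefore a dictionary: for a bisection \(V\in\Bis\), I would show that \(\vartheta(V)\) is left multiplication by a unique bisection \(\vartheta_V\subseteq\Gr[H]\). Since \(\vartheta(V)\) is a right-\(\Gr[H]\)-equivariant partial homeomorphism of~\(\Gr[H]\), both its domain and codomain are saturated under right translation, so each is the full \(\s\)-preimage of an open subset of~\(\Gr[H]^0\), and \(\vartheta(V)\) is determined by its restriction to these units. Define \(\vartheta_V\defeq \vartheta(V)\bigl(\mathrm{dom}(\vartheta(V))\cap\Gr[H]^0\bigr)\subseteq \Gr[H]\); then right-\(\Gr[H]\)-equivariance forces \(\vartheta(V)(h)=\vartheta_V\cdot h\). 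That \(\vartheta_V\) is a bisection of~\(\Gr[H]\) follows because \(\s|_{\vartheta_V}\) is a homeomorphism onto the open domain (inverse of \(\vartheta(V)|_{\Gr[H]^0}\)) and \(\rg|_{\vartheta_V}\) is a homeomorphism onto the codomain. Conversely, left multiplication by a bisection is always a right-\(\Gr[H]\)-equivariant partial homeomorphism, so \(V\mapsto\vartheta_V\) gives a bijection between the right-equivariant elements of \(I(\Gr[H])\) and \(\Bis(\Gr[H])\).

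Under this dictionary, I translate the six conditions in Lemma~\ref{lem:action_from_theta} directly. Composition of left multiplications corresponds to the product of bisections, so \ref{en:action_from_theta1} becomes~\ref{en:actor_from_groupoid_model_bisections_1}; the adjoint of left multiplication by \(\vartheta_V\) is left multiplication by \(\vartheta_V^*\), so \ref{en:action_from_theta2} becomes~\ref{en:actor_from_groupoid_model_bisections_2}; \ref{en:action_from_theta4} is~\ref{en:actor_from_groupoid_model_bisections_3} and absorbs the empty bisection; \ref{en:action_from_theta5} is~\ref{en:actor_from_groupoid_model_bisections_4}; the covering condition \ref{en:action_from_theta3} becomes~\ref{en:actor_from_groupoid_model_bisections_5} once one observes that the \(\Gr[H]\)-saturated domain of a left multiplication by \(\vartheta_\Slice\) is \(\s^{-1}(\rg(\vartheta_\Slice))\) and thus its set of units is \(\rg(\vartheta_\Slice)\); and \ref{en:action_from_theta6} transfers verbatim. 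Finally, the properness equivalence and the redundancy of \ref{en:actor_from_groupoid_model_bisections_6} in the proper case are inherited from Theorem~\ref{the:actor_from_groupoid_model} together with the cited \cite{Meyer:Diagrams_models}*{Lemma~3.7}.

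I expect the main obstacle to be the bookkeeping in the dictionary step: verifying carefully that for \(V\in\Bis(\Bisp)\) the two different uses of the anchor maps~\(\s\) of \(\Bisp\) and of~\(\Gr[H]\) (mediated through \(\varrho\colon\Gr[H]^0\to\Gr^0\)) fit together so that \(\vartheta_V\) really is a bisection of~\(\Gr[H]\) and so that the adjoint condition matches the inner product \(\braket{V}{W}\) in \(\Bis(\Bisp)\). Once this is set up, the remaining translations are routine.
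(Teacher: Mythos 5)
Your proposal is correct and follows essentially the same route as the paper: apply Lemma~\ref{lem:action_from_theta} to \(Y=\Gr[H]\) and identify the right-\(\Gr[H]\)-equivariant partial homeomorphisms with left multiplications by bisections, then transfer the six conditions and quote the cited lemma for the redundancy of~\ref{en:actor_from_groupoid_model_bisections_6} in the proper case. The only difference is that you verify the dictionary between equivariant partial homeomorphisms and bisections by hand (modulo a harmless mix-up of \(\rg\)- versus \(\s\)-preimages for the saturated domains), whereas the paper simply cites \cite{Buss-Exel-Meyer:InverseSemigroupActions}*{Lemma~4.10} for this fact.
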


\begin{proof}
  The bisections of~\(\Gr[H]\) embed into the inverse semigroup of
  partial homeomorphisms on~\(\Gr[H]\) by the left multiplication
  action.  After this embedding, the conditions in the theorem are
  exactly the conditions in Lemma~\ref{lem:action_from_theta}.  So
  these conditions ensure that there is an action of the groupoid
  model~\(\Mod\) on~\(\Gr[H]\) that induces the given partial
  homeomorphisms.  In addition, a partial homeomorphism on~\(\Gr[H]\)
  commutes with the right multiplication action of~\(\Gr[H]\) if and
  only if it is left multiplication by a bisection in~\(\Gr[H]\) by
  \cite{Buss-Exel-Meyer:InverseSemigroupActions}*{Lemma~4.10}.  Thus
  the data and conditions in the theorem describe actors
  \(\Mod \to \Gr[H]\).  Here the last condition is redundant for a
  proper actor by \cite{Meyer:Diagrams_models}*{Lemma~3.7}.
\end{proof}

\begin{remark}
  In particular, we may apply
  Theorem~\ref{the:actor_from_groupoid_model_bisections} to the
  identity actor on the groupoid model~\(\Mod\).  By the theorem, this
  corresponds to a certain map
  \(\vartheta\colon \Bis(\Gr) \sqcup \Bis(\Bisp) \to\Bis(\Mod)\).  Of
  course, this map is the standard one that is implicit in the
  construction of the groupoid model~\(\Mod\) as a transformation
  groupoid for an inverse semigroup action, where the inverse
  semigroup is generated by \(\Bis(\Gr) \sqcup \Bis(\Bisp)\).  We may
  also describe the identity actor as in
  Theorem~\ref{the:actor_from_groupoid_model}.  Here the maps
  \(\varrho\) and~\(\mu_{\Gr}\) combine to an actor from~\(\Gr\)
  to~\(\Mod\).  This actor is implicit in the universal property
  of~\(\Mod\) because an action of~\(\Mod\) on a space is equivalent
  to an action of~\(\Gr\) together with some extra data.
\end{remark}

We now specialise to the case when the topological space~\(\Gr^0\) is
discrete.  Then~\(\{x\}\) for an element of \(\Gr\) or~\(\Bisp\) is a
bisection.  We abbreviate \(\vartheta_x = \vartheta_{\{x\}}\) for
\(x\in \Gr\sqcup \Bisp\) and get the following corollary:

\begin{corollary}
  \label{cor:actor_from_discrete_groupoid_model_bisections}
  Let \(\Gr\) and~\(\Gr[H]\) be groupoids, let \(\Reg\subseteq \Gr^0\)
  be open and \(\Gr\)\nb-invariant, and let
  \(\Bisp\colon \Gr\leftarrow \Gr\) be a groupoid correspondence that
  is regular on~\(\Reg\).  Assume that~\(\Gr\) is discrete.
  Let~\(\Mod\) be the groupoid model of \((\Gr,\Bisp,\Reg)\).  An
  actor \(\Mod\to \Gr[H]\) is equivalent to a map
  \(\vartheta\colon \Gr \sqcup \Bisp \to \Bis(\Gr[H])\)
  such that
  \begin{enumerate}[label=\textup{(\ref*{cor:actor_from_discrete_groupoid_model_bisections}.\arabic*)},
    leftmargin=*,labelindent=0em]
  \item \label{en:actor_from_discrete_groupoid_model_bisections_1}%
    \(\vartheta_x \vartheta_y = \vartheta_{x y}\) if
    \(x,y\in\Gr\sqcup \Bisp\) and \(x\in\Gr\) or \(y\in\Gr\);
  \item \label{en:actor_from_discrete_groupoid_model_bisections_2}%
    \(\vartheta_x^* \vartheta_x = \vartheta_{\s(x)}\) for all
    \(x\in\Bisp\), and \(\vartheta_x^* \vartheta_y=\emptyset\) for
    \(x,y\in\Bisp\) in different right \(\Gr\)\nb-orbits;
  \item \label{en:actor_from_discrete_groupoid_model_bisections_3}%
    the domains of~\(\vartheta_x\) for \(x\in\Gr^0\)
    cover~\(\Gr[H]^0\);
  \item \label{en:actor_from_discrete_groupoid_model_bisections_5}%
    if \(x\in\Reg\), then the domain of~\(\vartheta_x\) is contained
    in the union of the codomains of~\(\vartheta_y\) for
    \(y\in \Bisp\);
  \item \label{en:actor_from_discrete_groupoid_model_bisections_6}%
    the codomain of~\(\vartheta_x\) is closed for all \(x\in\Bisp\).
  \end{enumerate}
  The actor is proper if and only if \(\vartheta_x\) for
  \(x\in \Gr^0\) is a compact subset of~\(\Gr[H]^0\).  If the actor is
  proper, then
  condition~\ref{en:actor_from_discrete_groupoid_model_bisections_6} is
  redundant.
\end{corollary}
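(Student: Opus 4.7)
The plan is to derive the corollary by specialising Theorem~\ref{the:actor_from_groupoid_model_bisections} to discrete $\Gr^0$. Since $\s\colon \Bisp\to\Gr^0$ is \'etale and $\Gr^0$ is discrete, $\Bisp$ itself is discrete, so every singleton $\{x\}$ for $x\in\Gr\sqcup\Bisp$ is a bisection. The equivalence between the data in the theorem and that in the corollary will come from the rule $\vartheta_V\defeq \bigsqcup_{x\in V}\vartheta_x$, viewed as a subset of~$\Gr[H]$, which restricts to singletons and extends to arbitrary bisections.

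The first step is to verify that this extension takes values in $\Bis(\Gr[H])$. That is, for any bisection $V$ of $\Gr$ or~$\Bisp$, the sets $\vartheta_x$ for distinct $x\in V$ should have disjoint sources and ranges in~$\Gr[H]^0$. For $V\in\Bis(\Gr)$ with distinct $x,y\in V$, one has $\s(x)\neq \s(y)$ and $\rg(x)\neq\rg(y)$ in~$\Gr^0$, and multiplicativity~\ref{en:actor_from_discrete_groupoid_model_bisections_1} applied to the empty product of distinct units in $\Gr^0$ yields $\vartheta_{\s(x)}\cap\vartheta_{\s(y)}=\emptyset$ and $\vartheta_{\rg(x)}\cap\vartheta_{\rg(y)}=\emptyset$, hence disjointness of sources and ranges of $\vartheta_x$ and $\vartheta_y$. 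For $V\in\Bis(\Bisp)$ with distinct $x,y\in V$, the bisection condition forces $x,y$ into distinct right $\Gr$\nb-orbits, and condition~\ref{en:actor_from_discrete_groupoid_model_bisections_2} gives $\vartheta_x^*\vartheta_y=\emptyset$, that is, $\rg(\vartheta_x)\cap\rg(\vartheta_y)=\emptyset$; the source disjointness reduces as above to the $\Gr^0$ case via $\vartheta_x^*\vartheta_x=\vartheta_{\s(x)}$.

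Once the extension is in place, I would match the conditions. Conditions~\ref{en:actor_from_groupoid_model_bisections_1} and~\ref{en:actor_from_groupoid_model_bisections_2} correspond directly on singletons to~\ref{en:actor_from_discrete_groupoid_model_bisections_1} and~\ref{en:actor_from_discrete_groupoid_model_bisections_2}, with the general case recovered from the extension rule and multiplicativity. Conditions~\ref{en:actor_from_groupoid_model_bisections_3} and~\ref{en:actor_from_groupoid_model_bisections_4} combine to~\ref{en:actor_from_discrete_groupoid_model_bisections_3} via $\Gr^0=\bigsqcup_{x\in\Gr^0}\{x\}$, and~\ref{en:actor_from_groupoid_model_bisections_5} specialises to~\ref{en:actor_from_discrete_groupoid_model_bisections_5} using $\rg(\vartheta_\Reg)=\vartheta_\Reg=\bigsqcup_{x\in\Reg}\vartheta_x$. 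For condition~\ref{en:actor_from_groupoid_model_bisections_6}, note that in a discrete space a bisection is precompact iff it is finite, so it suffices to consider singletons; applying the closure condition there yields the closedness~\ref{en:actor_from_discrete_groupoid_model_bisections_6}. Finally, for the properness assertion I would invoke Theorem~\ref{the:actor_from_groupoid_model}: the actor is proper iff $\varrho\colon\Gr[H]^0\to\Gr^0$ is proper, which for discrete $\Gr^0$ is equivalent to each fibre $\varrho^{-1}(x)=\vartheta_x$ being compact; the redundancy of~\ref{en:actor_from_discrete_groupoid_model_bisections_6} in the proper case is inherited from the theorem. The main subtlety will be the precise translation of the closure condition~\ref{en:actor_from_groupoid_model_bisections_6} to the cleaner closedness statement~\ref{en:actor_from_discrete_groupoid_model_bisections_6}, for which discreteness of~$\Bisp$ is the essential input.
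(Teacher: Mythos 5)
Your overall strategy---specialise Theorem~\ref{the:actor_from_groupoid_model_bisections} to singleton bisections and extend by unions---is exactly the paper's, and your verification that the extension lands in \(\Bis(\Gr[H])\) and your treatment of properness via the fibres of \(\varrho\) are fine. Two steps are stated too quickly, though. First, condition~\ref{en:actor_from_groupoid_model_bisections_2} on singletons does \emph{not} ``correspond directly'' to your~\ref{en:actor_from_discrete_groupoid_model_bisections_2}: for \(x\in\Bisp\) and \(y=x\cdot g\) with \(g\in\Gr\) not a unit, one has \(\braket{\{x\}}{\{y\}}=\{g\}\), and you must derive \(\vartheta_x^*\vartheta_{xg}=\vartheta_x^*\vartheta_x\vartheta_g=\vartheta_{\s(x)}\vartheta_g=\vartheta_g\) from \ref{en:actor_from_discrete_groupoid_model_bisections_1} and~\ref{en:actor_from_discrete_groupoid_model_bisections_2}; this computation is precisely the first point of the paper's proof. (Relatedly, you should record that the \(\vartheta_u\) for units~\(u\) are idempotent bisections and hence contained in \(\Gr[H]^0\), which is what legitimises reading \(\vartheta_u\) as a subset of \(\Gr[H]^0\), identifying it with \(\varrho^{-1}(u)\), and matching \ref{en:actor_from_groupoid_model_bisections_3}.) Second, applying \ref{en:actor_from_groupoid_model_bisections_6} to a singleton does not ``yield the closedness''; it only gives \(\overline{\rg(\vartheta_x)}\subseteq\bigcup_{y\in\Bisp}\rg(\vartheta_y)\). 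To upgrade this to closedness of \(\rg(\vartheta_x)\) you need that the codomains are open and that \(\rg(\vartheta_y)\) is disjoint from \(\rg(\vartheta_x)\) for~\(y\) in a different right \(\Gr\)\nb-orbit (by~\ref{en:actor_from_discrete_groupoid_model_bisections_2}) while \(\rg(\vartheta_{xg})=\rg(\vartheta_x)\) within an orbit; so the essential input is condition~\ref{en:actor_from_discrete_groupoid_model_bisections_2} together with discreteness, not discreteness alone. For the redundancy of~\ref{en:actor_from_discrete_groupoid_model_bisections_6} in the proper case, the paper argues more directly that \(\rg(\vartheta_x)\) is homeomorphic to the compact set \(\s(\vartheta_x)=\vartheta_{\s(x)}\), hence compact and closed; inheriting the redundancy from the theorem works too, but only once the equivalence of the two versions of condition (6) has actually been established.
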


\begin{proof}
  In the situation of
  Corollary~\ref{cor:actor_from_discrete_groupoid_model_bisections},
  \ref{en:actor_from_discrete_groupoid_model_bisections_1}
  and~\ref{en:actor_from_discrete_groupoid_model_bisections_2} imply
  \(\vartheta_x^* \vartheta_{x g} = \vartheta_g\) for all
  \(x\in\Bisp\), \(g\in\Gr\), so
  that~\ref{en:actor_from_discrete_groupoid_model_bisections_2} allows
  to simplify \(\vartheta_x^* \vartheta_y\) for all \(x,y\in\Bisp\) as
  in Condition~\ref{en:actor_from_groupoid_model_bisections_2}.  The
  bisections~\(\vartheta_x\) for \(x\in\Gr^0\) are idempotent.  Thus
  they are contained in the unit bisection \(\Gr[H]^0\).  If the actor
  is proper, then the domain of
  \(\vartheta_x^* \vartheta_x = \vartheta_{\s(x)}\) is compact for all
  \(x\in\Bisp\).  Then the domain of \(\vartheta_x\vartheta_x^*\) is
  compact as well because~\(\vartheta_x\) implements a homeomorphism
  between these two subsets.  Thus
  condition~\ref{en:actor_from_discrete_groupoid_model_bisections_6}
  is redundant in the proper case as asserted.
\end{proof}

\begin{corollary}
  \label{cor:dynamical_CK_as_actor}
  Let \(\Gr=V\) be a set, made a groupoid with only identity arrows.
  Let \(\rg,\s\colon E\rightrightarrows V\) be a directed graph,
  viewed as a groupoid correspondence
  \(\Bisp\colon \Gr\leftarrow \Gr\).  Let \(\Reg \subseteq V\) be the
  set of regular vertices of the graph.  Let~\(\Mod\) be the groupoid
  model of \((\Gr,\Bisp,\Reg)\) and let~\(\Gr[H]\) be another \'etale
  locally compact groupoid.  Then there is a canonical bijection
  between proper actors from~\(\Mod\) to~\(\Gr[H]\) and nondegenerate
  dynamical Cuntz--Krieger families in~\(\Gr[H]\).
\end{corollary}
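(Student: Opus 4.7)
The plan is to derive the statement as a specialisation of Corollary~\ref{cor:actor_from_discrete_groupoid_model_bisections}. Two structural features of this setup drive the translation: since $\Gr=V$ has only identity arrows, the product $v\cdot w$ in $\Gr$ is defined if and only if $v=w$; and the right $\Gr$-action on $E$ is trivial, so every edge is alone in its right $\Gr$-orbit.

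For the forward direction, start with a proper actor $\Mod\to\Gr[H]$ and apply Corollary~\ref{cor:actor_from_discrete_groupoid_model_bisections} to obtain a map $\vartheta\colon V\sqcup E\to\Bis(\Gr[H])$. Set $\Omega_v\defeq\vartheta_v$ and $T_e\defeq\vartheta_e$; properness of the actor makes each $\Omega_v$ compact-open. Applying \ref{en:actor_from_discrete_groupoid_model_bisections_1} to distinct $v,w\in V$ gives $\Omega_v\cap\Omega_w=\vartheta_v\vartheta_w=\emptyset$, which is \ref{en:dCK_1}. The identity $\vartheta_{\rg(e)}\vartheta_e=\vartheta_e$ from \ref{en:actor_from_discrete_groupoid_model_bisections_1} yields $\rg(T_e)\subseteq\Omega_{\rg(e)}$, and \ref{en:actor_from_discrete_groupoid_model_bisections_2} yields $\s(T_e)=\Omega_{\s(e)}$, together giving \ref{en:dCK_2}. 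For $e\neq f$, the orbits $\{e\}$ and $\{f\}$ differ, so \ref{en:actor_from_discrete_groupoid_model_bisections_2} forces $\vartheta_e^*\vartheta_f=\emptyset$; interpreted as a composition of bisections in $\Gr[H]$ this says $\rg(T_e)\cap\rg(T_f)=\emptyset$, which is \ref{en:dCK_3}. For $v\in V_\reg=\Reg$, condition \ref{en:actor_from_discrete_groupoid_model_bisections_5} gives $\Omega_v\subseteq\bigcup_{e\in E}\rg(T_e)$; combined with $\rg(T_e)\subseteq\Omega_{\rg(e)}$ and the disjointness from \ref{en:dCK_1} and \ref{en:dCK_3}, only edges with $\rg(e)=v$ contribute and the union is disjoint, yielding \ref{en:dCK_4}. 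Nondegeneracy $\Gr[H]^0=\bigsqcup_v\Omega_v$ then follows from \ref{en:actor_from_discrete_groupoid_model_bisections_3} combined with \ref{en:dCK_1}.

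For the converse, given a nondegenerate dynamical Cuntz--Krieger family, define $\vartheta_v\defeq\Omega_v$ and $\vartheta_e\defeq T_e$ and verify each condition of Corollary~\ref{cor:actor_from_discrete_groupoid_model_bisections} by running the equivalences above backwards; condition \ref{en:actor_from_discrete_groupoid_model_bisections_6} is automatic in the proper case, as noted in that corollary, and properness of the resulting actor matches exactly the compact-open requirement on the $\Omega_v$. The two constructions are mutually inverse on the bisection data by definition, giving the asserted bijection. There is no substantive obstacle here: the content is precisely that the two specialisations noted above ($vw$-undefined and singleton $\Gr$-orbits) collapse the five conditions of Corollary~\ref{cor:actor_from_discrete_groupoid_model_bisections} into the four Cuntz--Krieger axioms of Definition~\ref{def:dCK} together with nondegeneracy.
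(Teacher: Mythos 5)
Your proof is correct and follows exactly the route the paper intends: the corollary is stated without proof as an immediate specialisation of Corollary~\ref{cor:actor_from_discrete_groupoid_model_bisections}, and your dictionary (trivial products $vw$ for $v\neq w$, singleton right $\Gr$\nb-orbits, idempotent $\vartheta_v\subseteq\Gr[H]^0$, properness $\Leftrightarrow$ compactness of the $\Omega_v$, redundancy of the closed-codomain condition) is precisely the verification the authors leave to the reader. No gaps.
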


\begin{corollary}
  Assume that two graph \(\Cst\)\nb-algebras have no subquotients
  isomorphic to \(\Cont(\T)\).  If a dynamical Cuntz--Krieger family
  induces an isomorphism between them, then the inverse is also
  induced by a dynamical Cuntz--Krieger family.
\end{corollary}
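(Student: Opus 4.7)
The plan is to reinterpret the hypothesis at the level of groupoid actors via Corollary~\ref{cor:dynamical_CK_as_actor} and then apply Theorem~\ref{the:Cartan_isomorphism}. Let $\Gamma_1$ and $\Gamma_2$ denote the two graphs, with Paterson groupoid models $\Mod_1$ and $\Mod_2$ respectively, so that $\Cst(\Mod_i)\cong \Cst(\Gamma_i)$ canonically by Theorem~\ref{the:CP_is_groupoid_Cstar}. By Corollary~\ref{cor:dynamical_CK_as_actor}, the dynamical Cuntz--Krieger family that is hypothesised to induce an isomorphism $\Cst(\Gamma_1)\to\Cst(\Gamma_2)$ (which we may take to be nondegenerate by the discussion following Definition~\ref{def:dCK}) corresponds to a proper actor $\alpha\colon \Mod_1 \to \Mod_2$, and the $*$\nobreakdash-homomorphism it induces coincides, under these identifications, with the given isomorphism.

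The next step is to verify that $\Mod_1$ is effective and Hausdorff so that Theorem~\ref{the:Cartan_isomorphism} applies. Hausdorffness of Paterson's graph groupoid is automatic. For effectiveness I would use the standard equivalence between effectiveness of the graph groupoid and Condition~(L) on the underlying graph: if $\Gamma_1$ contained a cycle without entrance, the standard corner-and-quotient argument would produce a subquotient of $\Cst(\Gamma_1)$ isomorphic to $\Cont(\T)$, contradicting the hypothesis.

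Once $\Mod_1$ is known to be effective and Hausdorff, Theorem~\ref{the:Cartan_isomorphism} immediately yields that $\alpha$ is a groupoid isomorphism. As noted in the paragraph preceding that theorem, a groupoid isomorphism is invertible in the category of actors, and since the induced map on objects is a homeomorphism, the inverse actor $\alpha^{-1}\colon \Mod_2 \to \Mod_1$ is automatically proper. Applying Corollary~\ref{cor:dynamical_CK_as_actor} in the opposite direction, with source graph $\Gamma_2$ and target groupoid $\Mod_1$, converts $\alpha^{-1}$ into a nondegenerate dynamical Cuntz--Krieger $\Gamma_2$-family in $\Mod_1$. By functoriality of the passage from actors to $*$\nobreakdash-homomorphisms (Proposition~\ref{actor_to_Cstar}), the induced map $\Cst(\Gamma_2)\to\Cst(\Gamma_1)$ is the inverse of the original isomorphism.

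The main obstacle is the translation of the $\Cst$-algebraic hypothesis ``no subquotient is isomorphic to $\Cont(\T)$'' into the groupoid-theoretic condition of effectiveness of $\Mod_1$; this equivalence is well known for row-finite graphs but warrants a careful citation (or short argument) in the generality of Paterson's groupoid, which is precisely the setting needed here to accommodate irregular graphs.
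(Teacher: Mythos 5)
Your proposal is correct and follows essentially the same route as the paper: identify the family with a proper actor between the groupoid models, check that the source groupoid is Hausdorff and effective (via Condition~(L) and the absence of \(\Cont(\T)\) subquotients), apply Theorem~\ref{the:Cartan_isomorphism} to conclude the actor is a groupoid isomorphism, and translate the inverse actor back into a dynamical Cuntz--Krieger family. Your write-up merely fills in details the paper leaves implicit, such as the effectiveness argument and the automatic properness of the inverse actor.
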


\begin{proof}
  Nondegenerate dynamical Cuntz--Krieger families are equivalent to
  actors between the groupoid models of the graphs.  This groupoid is
  always Hausdorff, and it is effective if there are no subquotients
  isomorphic to \(\Cont(\T)\).  Then
  Theorem~\ref{the:Cartan_isomorphism} implies that if an isomorphism
  of groupoid \(\Cst\)\nb-algebras is induced by a groupoid actor,
  then so is its inverse.  A dynamical Cuntz--Krieger family that
  induces an isomorphism on the groupoid \(\Cst\)\nb-algebras is
  automatically nondegenerate.
\end{proof}

Let now \(\rg_j,\s_j\colon E_j\rightrightarrows V_j\) for \(j=1,2\) be
two directed graphs.  Let \((\Gr_j,\Bisp_j,\Reg_j)\) be associated to
them as in Corollary~\ref{cor:dynamical_CK_as_actor} and
let~\(\Mod_j\) be the resulting groupoid models.  Let~\(\Gr[H]\) be a
third \'etale locally compact groupoid.  Consider a nondegenerate
dynamical Cuntz--Krieger family for the graph
\(\rg_1,\s_1\colon E_1\rightrightarrows V_1\) in~\(\Mod_2\) and a
nondegenerate dynamical Cuntz--Krieger family for the graph
\(\rg_2,\s_2\colon E_2\rightrightarrows V_2\) in~\(\Gr[H]\).  Then we
may identify these nondegenerate dynamical Cuntz--Krieger families with
actors \(\Mod_1 \to \Mod_2\) and \(\Mod_2 \to \Gr[H]\).  The latter
two may be composed to an actor \(\Mod_1 \to \Gr[H]\), which in turn
corresponds to a nondegenerate dynamical Cuntz--Krieger family for the
graph \(\rg_1,\s_1\colon E_1\rightrightarrows V_1\) in~\(\Gr[H]\).  In
particular, if~\(\Gr[H]\) is the groupoid model of another graph, then
this gives a composition for nondegenerate dynamical Cuntz--Krieger
families in groupoid models of graphs.  This gives rise to a category
because actors form a category.  In this sense, we may interpret
nondegenerate dynamical Cuntz--Krieger families as morphisms between
the graphs.

\section{Comparison to relation morphisms of directed graphs}

In this section, we show that the \Star{}homomorphism between two
graph \(\Cst\)\nb-algebras defined by an admissible relation morphism
as in~\cite{Castro-DAndrea-Hajac:Relation_morphisms} also comes from a
possibly degenerate dynamical Cuntz--Krieger family.  Here
\emph{degenerate} means that we drop the condition
\(\Gr[H]^0 = \bigsqcup_{v\in V} \Omega_v\) that is needed for a
dynamical Cuntz--Krieger family to correspond to an actor.

Let \(R\subseteq X\times Y\) be a relation from~\(X\) to~\(Y\).  We
call \(R(x)\defeq\setgiven{y\in Y}{(x,y)\in R}\) for \(x\in X\) the
\emph{image} of \(x\in X\) and
\(R^{-1}(y)\defeq\setgiven{x\in Y}{(x,y)\in R}\) the \emph{preimage}
of \(y\in Y\).

Let \(\Gamma=(\rg,\s\colon E\rightrightarrows V)\) be a graph.  Let
\(FP(\Gamma)\) denote the set of finite paths in~\(\Gamma\) and let
\(\rg_{P\Gamma}\) and~\(\s_{P\Gamma}\) denote the extensions of
\(\rg\) and~\(\s\) to finite paths.  Let~\(\Reg\) be the set of
regular vertices of~\(\Gamma\).  For \(x,x'\in FP(\Gamma)\), we write
\(x\preceq x'\) if \(x'=xy\) for some \(y\in FP(\Gamma)\), and
\(x\sim x'\) if \(x\preceq x'\) or \(x'\preceq x\).

\begin{definition}[\cite{Castro-DAndrea-Hajac:Relation_morphisms}*{Definition~3.1}]
  \label{def:relation.morphism}
  Let \(\Gamma_j=(\rg_j,\s_j\colon E_j\rightrightarrows V_j)\) for
  \(j=1,2\) be graphs.  A \emph{relation morphism}
  \(R\colon \Gamma_1\to \Gamma_2\) is a relation
  \(R\subseteq FP(\Gamma_1) \times FP(\Gamma_2)\) such that
  \begin{enumerate}
  \item if \((x,y)\in R\) and \(y\in V_2\), then \(x\in V_1\) (that is
    \(R^{-1}(V_2)\subseteq V_1\));
  \item if \((x,y)\in R\), then
    \((\s_{P\Gamma_1}(x), \s_{P\Gamma_2}(y))\in R\) (source
    preserving);
  \item if \((x,y)\in R\), then
    \((\rg_{P\Gamma_1}(x), \rg_{P\Gamma_2}(y))\in R\) (range
    preserving).
  \end{enumerate}
\end{definition}

\begin{definition}[\cite{Castro-DAndrea-Hajac:Relation_morphisms}*{Definition~6.5}]
  \label{def:admissible.relation}
  Let \(\Gamma_1\) and~\(\Gamma_2\) be graphs.  A relation morphism
  \(R\colon\Gamma_1\to \Gamma_2\) is said to be \emph{admissible} if:
  \begin{enumerate}
  \item it is multiplicative, that is, if \((x,y),(x',y')\in R\),
    \(\s_{P\Gamma_1}(x)=\rg_{P\Gamma_1}(x')\),
    \(\s_{P\Gamma_2}(y)=\rg_{P\Gamma_2}(y')\), then
    \((x x',y y')\in R\);
  \item it is decomposable, that is, if \(y,y'\in FP(\Gamma_2)\),
    \(\s_{P\Gamma_2}(y)=\rg_{P\Gamma_2}(y')\),
    \((\overline{x},yy')\in R\), then there are \(x\in R^{-1}(y)\) and
    \(x'\in R^{-1}(y')\) with \(xx'=\overline{x}\);
  \item it is proper, that is, \(\abs{R^{-1}(y)}<\infty\) for all
    \(y\in FP(\Gamma_2)\);
  \item it is vertex disjoint, that is,
    \(R^{-1}(v)\cap R^{-1}(v')=\emptyset\) for every \(v,v'\in V_2\);
  \item it is source bijective, that is, the restriction
    \(\s_{P\Gamma_1}\colon R^{-1}(f)\to R^{-1}(\s_2(f))\) is bijective
    for all \(f\in E_2\);
  \item it is monotone, that is, if \((x,f),(x',f')\in R\),
    \(f,f'\in E_2\), \(x\preceq x'\), then \((x,f)=(x',f')\);
  \item it is regular, that is, if \(v\in V_2\) is regular,
    \(u\in R^{-1}(v)\) and \(x\in \rg_{P\Gamma_1}^{-1}(u)\), then there
    are \((x',f)\in R\), \(f\in E_2\) with \(\rg_2(f)=v\) and
    \(x\sim x'\).
  \end{enumerate}
\end{definition}

\begin{remark}
  The convention for paths in this paper is different from the one
  in~\cite{Castro-DAndrea-Hajac:Relation_morphisms}.  We made the
  suitable adaptations in Definitions \ref{def:relation.morphism}
  and~\ref{def:admissible.relation}.
\end{remark}

Given a graph~\(\Gamma\), let~\(\partial \Gamma\) be its boundary path
space and let~\(G_{\Gamma}\) be its boundary path groupoid.  For each
\(x,y\in FP(\Gamma)\) with \(\s(x)=\s(y)\), we define
\begin{align*}
  Z_x
  &\defeq \setgiven{z\in\partial \Gamma}{z=x z'
    \text{ for some } z'\in\partial\Gamma},\\
  Z(x,y)
  &\defeq \setgiven{(x z,\abs{x}-\abs{y},y z)\in G_{\Gamma}}
    {z\in\partial\Gamma,\ \rg(z)=\s(x)}.
\end{align*}

\begin{theorem}
  \label{thm:relation.to.CK}
  Let \(\Gamma_1\) and~\(\Gamma_2\) be graphs, and let
  \(R\colon\Gamma_1\to \Gamma_2\) be an admissible relation morphism.
  For each \(v\in V_2\), set
  \[
    \Omega_v \defeq \bigcup_{u\in R^{-1}(v)} Z_u,
  \]
  and for each \(f\in E_2\), set
  \[
    T_f\defeq \bigcup_{x\in R^{-1}(f)} Z(x,\s(x)).
  \]
  The families \((\Omega_v)_{v\in V_2}\) and \((T_f)_{f\in E_2}\) form
  a possibly degenerate dynamical Cuntz--Krieger
  \(\Gamma_2\)\nb-family in~\(G_{\Gamma_1}\).  The dynamical
  Cuntz--Krieger family is nondegenerate if and only if
  \(R^{-1}(V_2)=V_1\).  In addition,
  \begin{enumerate}[label=\textup{(\ref*{thm:relation.to.CK}.\arabic*)},
    leftmargin=*,labelindent=0em]
  \item \label{it:dCKtoR1}%
    for all \(u\in V_1\) and \(v\in V_2\), either
    \(Z_u\Omega_v=\emptyset\) or \(Z_u\Omega_v=Z_u\),
  \item \label{it:dCKtoR2}%
    for all \(u\in V_1\) and \(f\in E_2\), either \(T_fZ_u=\emptyset\)
    or there is \(x\in FP(\Gamma_1)\) such that \(T_fZ_u=Z(x,s(x))\).
  \end{enumerate}
\end{theorem}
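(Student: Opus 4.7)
The plan is to verify the four axioms of Definition~\ref{def:dCK} together with the nondegeneracy criterion and the supplementary identities~\ref{it:dCKtoR1}--\ref{it:dCKtoR2}. As a preliminary, properness of $R$ makes each $R^{-1}(v)$ and $R^{-1}(f)$ finite, so $\Omega_v$ and $T_f$ are finite unions of compact-open cylinders $Z_u$ and cylinder-bisections $Z(x,\s(x))$; monotonicity makes distinct elements of $R^{-1}(f)$ $\preceq$-incomparable, so their range cylinders $Z_x$ are disjoint, and source bijectivity makes the source cylinders $Z_{\s(x)}$ disjoint, so $T_f$ is itself a compact-open bisection. Axiom~\ref{en:dCK_1} then follows from vertex disjointness, Axiom~\ref{en:dCK_2} from source bijectivity (identifying $\s(T_e)=\Omega_{\s_2(e)}$) combined with range preservation (giving $\rg(T_e)\subseteq\Omega_{\rg_2(e)}$), and Axiom~\ref{en:dCK_3} from a second application of monotonicity.

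The main obstacle is the covering axiom~\ref{en:dCK_4}. I plan to fix regular $v\in V_2$ and $z\in\Omega_v$ with $\rg_1(z)=u\in R^{-1}(v)$, set $S\defeq\bigcup_{e\in\rg_2^{-1}(v)}R^{-1}(e)$ (a finite subset of $FP(\Gamma_1)$ of maximum length $N$ by properness and regularity of $v$), and argue by contradiction that some element of $S$ is a prefix of $z$. Assuming not, regularity applied to any finite prefix $x$ of $z$ produces $(x',f)\in R$ with $\rg_2(f)=v$ and $x\sim x'$; since $x'\preceq z$ is excluded by the standing assumption, one must have $x\prec x'$ strictly. When $z$ is infinite or longer than $N$, choosing $x$ of length $N+1$ gives $|x'|\leq N<|x|$, an immediate contradiction. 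Otherwise $z$ is finite of length $\leq N$, and regularity at $x=z$ forces $x'$ to strictly extend $z$; so $\s_1(z)$ has an outgoing edge, and being singular as the end of a finite boundary path, $\s_1(z)$ must be an infinite emitter. For each edge $e$ with $\rg_1(e)=\s_1(z)$, the path $ze$ still has range $u$, so regularity applied to $ze$ yields $x'_e\in S$ with $ze\preceq x'_e$ (the alternative $x'_e\preceq z$ being ruled out), and the $(|z|+1)$-st edge of $x'_e$ is exactly $e$. Distinct edges then produce distinct $x'_e$, giving an injection from the infinite set of edges at $\s_1(z)$ into the finite set $S$, which is the sought contradiction.

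The remaining items are routine. Nondegeneracy $\bigsqcup_v\Omega_v=\partial\Gamma_1$ is equivalent to $R^{-1}(V_2)=V_1$ because every vertex of $\Gamma_1$ arises as $\rg_1$ of some boundary path. Property~\ref{it:dCKtoR1} is immediate from the disjointness of cylinders at distinct vertices. Property~\ref{it:dCKtoR2} follows because $T_fZ_u$ picks out the summand of $T_f$ with $\s_1(x)=u$, and source bijectivity guarantees at most one such $x\in R^{-1}(f)$; when it exists, $T_fZ_u=Z(x,\s(x))$.
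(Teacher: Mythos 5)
Your proof is correct, and for the bisection property of \(T_f\), for conditions \ref{en:dCK_1}--\ref{en:dCK_3}, for the nondegeneracy criterion, and for \ref{it:dCKtoR1}--\ref{it:dCKtoR2}, it runs exactly parallel to the paper's proof: each axiom is matched with the corresponding admissibility condition (vertex disjointness; source bijectivity plus range preservation; monotonicity), and \(T_f\) is seen to be a bisection because monotonicity makes distinct elements of \(R^{-1}(f)\) incomparable while source bijectivity separates their sources. The genuine divergence is in the covering axiom~\ref{en:dCK_4}. The paper argues directly, splitting into \(\abs{z}=\infty\) and \(\abs{z}<\infty\): in the infinite case it chooses an initial segment of \(z\) longer than every element of \(R^{-1}(\rg_2^{-1}(v))\) (the same length bound \(N\) you use), but in the finite case it invokes Lemma~6.2 of \cite{Castro-DAndrea-Hajac:Relation_morphisms} to upgrade \(x\sim z\) to \(x\preceq z\) using that the endpoint of \(z\) is singular. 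You instead run a single contradiction argument and resolve the short finite case internally: under the assumption that no element of the finite set \(S\) is a prefix of \(z\), regularity forces every \(x'\) it produces to strictly extend the given prefix, so the terminal vertex of \(z\) receives an edge and, being singular, receives infinitely many; applying regularity to each one-edge extension \(z e\) then injects an infinite set of edges into \(S\). This is a correct, self-contained replacement for the citation, at the cost of a slightly longer case analysis; the paper's route is shorter but leans on the external lemma. Everything else, including the reduction of \ref{en:dCK_4} to the single inclusion \(\Omega_v\subseteq\bigcup_{\rg_2(f)=v}\rg(T_f)\) via \ref{en:dCK_2} and \ref{en:dCK_3}, agrees with the paper.
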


\begin{proof}
  Since~\(R\) is proper, the unions defining each \(\Omega_v\)
  and~\(T_f\) are finite.  So these are compact-open subsets.  We
  prove that~\(T_f\) is a bisection.  Let \(x,x'\in R^{-1}(f)\) and
  \(z,z'\in\partial\Gamma\) be such that
  \(\gamma\defeq(x z,\abs{x},z)\in Z(x,\s(x))\) and
  \(\gamma'\defeq(x'z',\abs{x'},z')\in Z(x',\s(x'))\).  Note that
  \((\s(x),\s(f)),(\s(x'),\s(f))\in R\).  First, assume that \(z=z'\).
  Then \(\s(x)=\s(x')\).  Since~\(R\) is source-bijective, this
  implies \(x=x'\) and then \(\gamma=\gamma'\).  Now assume that
  \(x z=x' z'\), so that \(x\sim x'\).  Since~\(R\) is monotone, this
  implies \(x=x'\), which further implies \(\gamma=\gamma'\).

  We now prove that the families satisfy the four conditions of the
  definition of a dynamical Cuntz--Krieger family.  The final claim
  about nondegeneracy is trivial.

  We prove the condition~\ref{en:dCK_1}.  Let \(v,v'\in V_2\) be such
  that \(v\neq v'\).  Since~\(R\) is vertex disjoint,
  \(R^{-1}(v)\cap R^{-1}(v')=\emptyset\).  Note also that
  \(Z_u\cap Z_{u'}=\emptyset\) for \(u,u'\in V_1\) with \(u\neq u'\).
  It follows that \(\Omega_v\cap \Omega_{v'}=\emptyset\).

  We prove the condition~\ref{en:dCK_2}.  Let \(f\in E_2\),
  \(\gamma=(xz,\abs{x},z)\in Z(x,\s(x))\subseteq T_f\) for some
  \(x\in R^{-1}(f)\), and \(w\in Z_u\subseteq \Omega_{\s(f)}\) for
  some \(u\in R^{-1}(\s(f))\).  Note that
  \((\s(x),\s(f)),(\rg(x),\rg(f))\in R\).  Now
  \begin{align*}
    \s(\gamma)
    &= z\in Z_{\s(x)}\subseteq \Omega_{\s(f)},\\
    \rg(\gamma)
    &= xz\in Z_{\rg(x)}\subseteq \Omega_{\rg(f)}.
  \end{align*}
  Since~\(R\) is source bijective and \(u\in R^{-1}(\s(f))\), there is
  \(x'\in R^{-1}(f)\) with \(\s(x)=u=\rg(w)\).  Then
  \(\gamma'\defeq(x' w,\abs{x'},w)\in T_f\) and \(\s(\gamma')=w\).

  We prove the condition~\ref{en:dCK_3}.  Let \(f,f'\in E^2\),
  \(\gamma=(xz,\abs{x},z)\in Z(x,\s(x))\subseteq T_f\) and
  \(\gamma'=(x'z',\abs{x'},z')\in Z(x',\s(x'))\subseteq T_{f'}\),
  where \((x,f),(x',f')\in R\) and \(z,z'\in\partial \Gamma_1\).
  Assume that \(xz=\rg(\gamma)=\rg(\gamma')=x'z'\).  In this case
  \(x\sim x'\).  Since~\(R\) is monotone, it follows that \(f=f'\).

  We prove the condition~\ref{en:dCK_4}.  Let~\(v\) be a regular
  vertex in~\(V_2\).  All that it remains to prove is that
  \(\Omega_v\subseteq \bigsqcup_{\rg(f)=v} \rg(T_f)\).  Let
  \(z\in\Omega_v\) and note that \((\rg(z),v)\in R\).

  Suppose first that \(\abs{z}<\infty\).  Since~\(v\) is regular,
  there are \(f\in \rg^{-1}(v)\) and \((x,f)\in R\) with \(x\sim z\).
  Since~\(\rg(z)\) is not a regular vertex of~\(V_1\), by
  \cite{Castro-DAndrea-Hajac:Relation_morphisms}*{Lemma~6.2},
  \(x\preceq z\).  Hence there is \(z'\in\partial\Gamma_1\) with
  \(z=xz'\).  Then
  \(\gamma\defeq(z,\abs{x},z')\in Z(x,\s(x))\subseteq T_f\) and
  \(\rg(\gamma)=z\).

  Suppose now that \(\abs{z}=\infty\).  Since~\(R\) is proper
  and~\(v\) is regular, there is an initial segment~\(w\) of~\(z\)
  with \(\abs{w}>\abs{x}\) for all \(x\in R^{-1}(\rg^{-1}(v))\).
  Since~\(R\) is regular, there are \(f\in \rg^{-1}(v)\) and
  \((x,f)\in R\) with \(x\sim w\).  Since \(\abs{x}<\abs{w}\), it
  follows that \(x\preceq w\).  This implies that there is
  \(z'\in\partial\Gamma_1\) with \(z=xz'\).  As above, if we let
  \(\gamma\defeq(z,\abs{x},z')\in Z(x,\s(x))\subseteq T_f\), then
  \(\rg(\gamma)=z\).
  
  Finally, \ref{it:dCKtoR1} follows because~$R$ is vertex disjoint and
  $Z_uZ_{u'}=\emptyset$ if $u,u'\in V_1$ and $u\neq u'$.  As
  for~\ref{it:dCKtoR2}, if \(u\in V_1\) and \(f\in E_2\) are such that
  \(T_fZ_u\neq\emptyset\), then there is $x\in R^{-1}(f)$ with
  $s(x)=u$.  The element~$x$ is unique because~$R$ is source bijective.
  Hence \(T_fZ_u=Z(x,s(x))Z_u=Z(x,s(x))\).
\end{proof}

\begin{theorem}
  \label{thm:from.dCKtoR}
  Let \(\Gamma_1\) and~\(\Gamma_2\) be graphs, and let
  \((\Omega_v)_{v\in E_2}\) and \((T_f)_{f\in E_2}\) be a dynamical
  Cuntz--Krieger \(\Gamma_2\)\nb-family in~\(G_{\Gamma_1}\).  Assume
  that the family satisfies \ref{it:dCKtoR1} and \ref{it:dCKtoR2}.
  Then there is an admissible relation
  \(R\colon \Gamma_1\to \Gamma_2\) such that the family given by
  Theorem~\textup{\ref{thm:relation.to.CK}} for~\(R\) coincides with
  \((\Omega_v)_{v\in E_2}\) and \((T_f)_{f\in E_2}\).
\end{theorem}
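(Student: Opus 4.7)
The plan is to construct the relation~$R$ directly from the given family: first on vertices and edges of~$\Gamma_2$, and then extended multiplicatively to all of $FP(\Gamma_1) \times FP(\Gamma_2)$.  Then I would verify the axioms of Definitions~\ref{def:relation.morphism} and~\ref{def:admissible.relation} along with the reconstruction statement.

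Set $R^{-1}(v) \defeq \{u \in V_1 : Z_u \subseteq \Omega_v\}$ for $v \in V_2$ and $R^{-1}(f) \defeq \{x \in FP(\Gamma_1) : Z(x, \s(x)) \subseteq T_f\}$ for $f \in E_2$.  Condition~\ref{it:dCKtoR1} combined with~\ref{en:dCK_1} gives the disjoint decomposition $\Omega_v = \bigsqcup_{u \in R^{-1}(v)} Z_u$.  For each $u$ with $Z_u \subseteq \Omega_{\s(f)} = \s(T_f)$, condition~\ref{it:dCKtoR2} provides a unique $x$ with $\s(x) = u$ and $T_f Z_u = Z(x, \s(x))$; uniqueness holds because $x$ is read off as the first $|x|$ letters of any element of $Z(x, \s(x))$.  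Hence $T_f = \bigsqcup_{x \in R^{-1}(f)} Z(x, \s(x))$, and this union is finite since $T_f$ is compact-open (being homeomorphic via~$\s$ to the compact-open set~$\Omega_{\s(f)}$).  For longer paths $y = f_1 \cdots f_n \in FP(\Gamma_2)$, declare $(x, y) \in R$ iff $x$ admits a factorisation $x = x_1 \cdots x_n$ with $(x_i, f_i) \in R$ and $\s(x_i) = \rg(x_{i+1})$.

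The axioms of Definition~\ref{def:relation.morphism} follow directly: $R^{-1}(V_2) \subseteq V_1$ is built in, and for $(x, f) \in R$, the inclusion $Z(x, \s(x)) \subseteq T_f$ yields $Z_{\s(x)} \subseteq \Omega_{\s(f)}$ and $Z_x \subseteq \rg(T_f) \subseteq \Omega_{\rg(f)}$, so by~\ref{it:dCKtoR1} also $Z_{\rg(x)} \subseteq \Omega_{\rg(f)}$.  For admissibility: multiplicativity and decomposability are built into the extension; vertex-disjointness is~\ref{en:dCK_1}; properness reduces to the finiteness of the above unions; source-bijectivity is the existence-and-uniqueness already established.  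Monotonicity uses~\ref{en:dCK_3}: if $x \preceq x'$ with $(x, f), (x', f') \in R$, then $\emptyset \neq Z_{x'} \subseteq Z_x \subseteq \rg(T_f) \cap \rg(T_{f'})$, forcing $f = f'$; if in addition $x \neq x'$, say $x' = xy$ with $|y| > 0$, then for any $w \in Z_{\s(x')}$ the two arrows $(x'w, |x|, yw), (x'w, |x'|, w) \in T_f$ share range $x'w$ but have different grade coordinates, contradicting that $T_f$ is a bisection.  Regularity uses~\ref{en:dCK_4}: given $v$ regular, $u \in R^{-1}(v)$, and $x$ with $\rg(x) = u$, pick any $z \in Z_x$ (nonempty since every finite path extends to a boundary path); then $z \in \rg(T_f)$ for some $f$ with $\rg(f) = v$, and the unique arrow in $T_f$ with range~$z$ has the form $(x'' z', |x''|, z')$ with $(x'', f) \in R$; from $x'' z' = z$ and $x \preceq z$ one concludes $x \sim x''$.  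Finally, Theorem~\ref{thm:relation.to.CK} applied to this $R$ reproduces the original family by the very choice of $R^{-1}(v)$ and $R^{-1}(f)$.

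The main obstacle is monotonicity, where the prefix relation on paths must be converted into a collision in the bisection~$T_f$ by exhibiting two distinct arrows with common range but different grade.  A secondary subtlety appears in the regularity argument when $\rg(x)$ is a sink in~$\Gamma_1$, so that $Z_x = \{x\}$ is a singleton; the argument still succeeds because the only available $z = x$ has finite length, forcing $x'' \preceq x$ and hence $x \sim x''$.
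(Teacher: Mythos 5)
Your proposal is correct and follows essentially the same route as the paper: your defining conditions $Z_u\subseteq\Omega_v$ and $Z(x,\s(x))\subseteq T_f$ are equivalent to the paper's $Z_u\Omega_v=Z_u$ and $T_fZ(\s(x),x)=Z(x,x)$, the multiplicative extension to longer paths replaces the paper's appeal to Lemma~3.11 of \cite{Castro-DAndrea-Hajac:Relation_morphisms}, and the admissibility checks use \ref{en:dCK_1}--\ref{en:dCK_4} and the bisection property of~$T_f$ in the same way (your monotonicity argument via range-injectivity is a cosmetic variant of the paper's source-injectivity argument).
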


\begin{proof}
  Let
  \begin{align*}
    R^0&\defeq \setgiven{(u,v)\in V_1\times V_2}{Z_u\Omega_v=Z_u}\\
    R^1&\defeq
    \setgiven{(x,f)\in FP(\Gamma_1)\times E_2}{T_fZ(s(x),x)=Z(x,x)}.
  \end{align*}
  We will use
  \cite{Castro-DAndrea-Hajac:Relation_morphisms}*{Lemma~3.11} to build
  a multiplicative and decomposable relation morphism~\(R\).  Let
  \((x,f)\in R^1\) be given.  First, we prove that
  \((s(x),s(f))\in R^0\).  Note that
  \[T_f Z_{s(x)}=T_f Z(s(x),x)Z(x,s(x))=Z(x,x)Z(x,s(x))=Z(x,s(x)).\]
  Hence
  \begin{align*}
    Z_{s(x)}\Omega_{s(f)}&=Z_{s(x)}T_f^{-1}T_fZ_{s(x)}\\
                         &=(T_fZ_{s(x)})^{-1}T_fZ_{s(x)}\\
                         &=Z(x,s(x))^{-1}Z(x,s(x))\\
                         &=Z_{s(x)}.
  \end{align*}
  The definition of~\(R^0\) implies that \((s(x),s(f))\in R^0\).  Now
  we prove that \((r(x),r(f))\in R^0\).  The equality
  \(T_fZ(s(x),x)=Z(x,x)\) implies that \((xy,|x|,y)\in T_f\) for every
  \(y\in Z_{s(x)}\).  Since \(Z_{s(x)}\neq\emptyset\), there is
  \(y\in Z_{s(x)}\).  Then
  \[
    (xy,0,xy)\in Z_{r(x)}T_fT_f^{-1}\subseteq
    Z_{r(x)}\Omega_{r(f)}.
  \]
  This implies \(Z_{r(x)}\Omega_{r(f)}\neq\emptyset\) and by
  hypothesis \(Z_{r(x)}\Omega_{r(f)}=Z_{r(x)}\).  So1
  \((r(x),r(f))\in R^0\).
  
  The next step is to show that~\(R\) obtained from
  \cite{Castro-DAndrea-Hajac:Relation_morphisms}*{Lemma~3.11} is
  admissible.  That~\(R\) is vertex disjoint follows from
  \ref{en:dCK_1}.
  
  Next we show that~\(R\) is monotone.  Let \((x,f),(x',f')\in R^1\)
  be such that \(x\preceq x'\).  The latter condition implies that
  \(Z(x',x')\subseteq Z(x,x)\) and hence
  \[
    Z(x',s(x'))T_{f'}^{-1}T_fZ(s(x),x)
    =Z(x',x')Z(x,x)
    =Z(x',x')\neq\emptyset.
  \]
  Now~\ref{en:dCK_3} implies \(f=f'\).  As a consequence,
  \[
    Z(s(x'),s(x'))T_{f}^{-1}T_fZ(s(x),s(x))\neq \emptyset
  \]
  and then \(s(x)=s(x')\).  If \(y\in Z_{s(x)}\), then
  \((xy,|x|,y),(x'y,|x'|,y)\in T_f\).  Since~\(T_f\) is a bisection,
  \(x=x'\).  Hence~\(R\) is monotone.
  
  Let us now prove that~\(R\) is source bijective.  That
  \(s\colon R^{-1}(f)\to R^{-1}(s(f))\) is injective follows as above.
  If \(x,x'\in R^{-1}(f)\) are such that \(s(x)=s(x')\), by choosing
  \(y\in Z_{s(x)}\) and using that~\(T_f\) is a bisection, we conclude
  that \(x=x'\).  Let \(u\in R^{-1}(s(f))\) so that
  \(Z_u\Omega_{s(f)}\neq\emptyset\).  Now \ref{en:dCK_2} implies
  \(T_fZ_u\neq\emptyset\).  By hypothesis, there is
  \(x\in FP(\Gamma_1)\) with \(T_fZ_u=Z(x,s(x))\).  Note that this
  condition implies that \(s(x)=u\).  Moreover,
  \begin{equation}
    \label{eq:dCKtoR.eq1}
    T_fZ(s(x),x)=T_fZ_uZ(s(x),x)=Z(x,s(x))Z(s(x),x)=Z(x,x)
  \end{equation}
  and hence \(x\in R^{-1}(f)\).  It follows that
  \(s\colon R^{-1}(f)\to R^{-1}(s(f))\) is surjective.
  
  For properness, note that if \(v\in V_2\), then \(R^{-1}(v)\) is
  finite because~\(\Omega_v\) is compact and
  \(\partial E_1=\bigsqcup_{u\in V_1}Z_u\) and~\(Z_u\) is open for
  every \(u\in V_1\).  Since we have already proved that~\(R\) is
  source bijective, we see also that~\(R^{-1}(f)\) is finite for every
  \(f\in E^2\).  Finally, decomposability and multiplicativity implies
  that \(R^{-1}(y)\) is finite for every \(y\in FP(\Gamma_2)\).
  Hence~\(R\) is proper.
  
  Next, \ref{it:dCKtoR1} and the definition of~\(R\) imply
  \[
    \Omega_v = \bigcup_{u\in R^{-1}(v)} Z_u.
  \]
  Moreover, \ref{it:dCKtoR1}, \eqref{eq:dCKtoR.eq1} and the definition
  of~\(R\) imply
  \begin{equation}
    \label{eq:dCKtoR.eq2}
    T_f= T_f\Omega_{s(f)}
    = T_f\bigsqcup_{u\in R^{-1}(s(f))}Z_u
    = \bigsqcup_{x\in R^{-1}(f)}Z(x,s(x)).
  \end{equation}
  
  So the dynamical Cuntz--Krieger family associated to $R$ in
  Theorem~\ref{thm:relation.to.CK} is \((\Omega, T)\).  It remains to
  prove that \(R\) is regular.  Let \(v\in V_2\) be regular.  Let also
  \(u\in R^{-1}(v)\) and \(x\in \rg_{P\Gamma_1}^{-1}(u)\).  The
  definition of~\(R\) implies \(Z_u\Omega_v=Z_u\).  Let
  \(y\in Z_{\s(x)}\) and note that \((xy,0,xy)\in Z_u\).  By
  \ref{en:dCK_4}, there is \(f\in E_2\) with \(r(f)=v\) and
  \((xy,0,xy)\in Z_uT_fT_f^{-1}\).  By \eqref{eq:dCKtoR.eq2}, we then
  find \(x'\in R^{-1}(f)\) such that
  \[
    (xy,0,xy)\in Z_uZ(x',s(x'))Z(s(x'),x')=Z_uZ(x',x').
  \]
  This implies that \((xy,0,xy)\in Z(x',x')\), and then \(x\sim x'\)
  follows.  Hence~\(R\) is regular.
\end{proof}

\begin{example}
  \label{exa:concrete_inverse}
  For \(i=1,2\), consider the graphs \(\Gamma_i=(V_i,E_i)\), where
  \(V_1=\{u_1,u_2,u_3\}\), \(E_1=\{e_1,e_2\}\), \(V_2=\{v_1,v_2\}\)
  and \(E_2=\{f_1,f_2\}\).  The source and range maps are given by
  \(s(e_i)=u_i\), \(r(e_i)=u_{i+1}\), \(s(f_i)=v_1\) and
  \(r(f_i)=v_2\) for \(i=1,2\).  The C*-algebra of both graphs are
  isomorphic to \(\Mat_3(\C)\).
  \[
    \begin{tikzpicture}[
      vertex/.style={circle, draw, minimum size=7mm, inner sep=0pt},
      edge/.style={->, >=stealth, semithick},
      corr/.style={|->}
      ]
      \node at (-1, 1) {$\Gamma_1$:};
      \node[vertex] (u1) at (0.5,1) {$u_1$};
      \node[vertex] (u2) at (2,1) {$u_2$};
      \node[vertex] (u3) at (3.5,1) {$u_3$};
      \draw[edge] (u1) -- node[above, font=\scriptsize] {$e_1$} (u2);
      \draw[edge] (u2) -- node[above, font=\scriptsize] {$e_2$} (u3);
      
      \node at (6, 1) {$\Gamma_2$:};
      \node[vertex] (v1) at (7.5,1) {$v_1$};
      \node[vertex] (v2) at (9.5,1) {$v_2$};
      \draw[edge] (v1) to[bend left=20] node[above, font=\scriptsize] {$f_1$} (v2);
      \draw[edge] (v1) to[bend right=20] node[below, font=\scriptsize] {$f_2$} (v2);
    \end{tikzpicture}
  \]
  By \cite{Castro-DAndrea-Hajac:Relation_morphisms}*{Example~7.1}, the
  relation
  \[
    R=\{(u_1,v_1),(u_2,v_2),(u_3,v_2),(e_2e_1,f_2),(e_1,f_1)\}
  \]
  is admissible.  Then Theorem~\ref{thm:relation.to.CK} produces the
  following dynamical Cuntz--Krieger \(\Gamma_2\)\nb-family
  in~\(G_{\Gamma_1}\):
  \[
    \Omega_{v_1} = Z_{u_1},\quad
    \Omega_{v_2} = Z_{u_2} \sqcup Z_{u_3},\qquad
    T_{f_1} = Z(e_1,u_1),\quad
    T_{f_2} = Z(e_2 e_1,u_1).
  \]
  This induces an actor \(G_{\Gamma_2}\to G_{\Gamma_1}\) and then a
  \Star{}homomorphism \(\Mat_3(\C) \to \Mat_3(\C)\) by
  Proposition~\ref{actor_to_Cstar}.  The latter is not zero and hence
  an isomorphism because~\(\Mat_3(\C)\) is simple.
  Theorem~\ref{the:Cartan_isomorphism} implies that the underlying
  actor is an isomorphism of groupoids.  Some standard computations
  show that the inverse is given by the following dynamical
  Cuntz--Krieger \(\Gamma_1\)\nb-family in~\(G_{\Gamma_2}\):
  \begin{gather*}
    \Omega_{u_1}=Z_{v_1},\quad
    \Omega_{u_2}=Z_{f_1},\quad
    \Omega_{u_3}=Z_{f_2},\\
    T_{e_1}=Z(f_1,s(f_1)),\quad
    T_{e_2}=Z(f_2,f_1).
  \end{gather*}
  Since $\Omega_{u_2}Z_{v_2}=Z_{f_1}$ does not come from a vertex,
  this family does not come from an admissible relation by
  Theorem~\ref{thm:relation.to.CK}.
\end{example}

\begin{bibdiv}
  \begin{biblist}
    \bibselect{references}
  \end{biblist}
\end{bibdiv}
\end{document}